\begin{document}
\title{Effective bound of linear series on arithmetic surfaces}
\author{Xinyi Yuan}
\author{Tong Zhang}

\address{Department of Mathematics, Princeton University, Princeton, NJ 08544, U.~S.~A.}
\email{yxy@math.princeton.edu}

\address{Department of Mathematics, East China Normal University, Shanghai 200241, People's Republic of China}
\email{mathtzhang@gmail.com}
\maketitle

\theoremstyle{plain}
\newtheorem{theorem}{Theorem}[section]
\newtheorem{lemma}[theorem]{Lemma}
\newtheorem{coro}[theorem]{Corollary}
\newtheorem{prop}[theorem]{Proposition}

\theoremstyle{remark}\newtheorem{remark}[theorem]{Remark}

\theoremstyle{plain}\newtheorem*{thmm}{Theorem}
\theoremstyle{plain}\newtheorem*{thma}{Theorem A}
\theoremstyle{plain}\newtheorem*{thmb}{Theorem B}
\theoremstyle{plain}\newtheorem*{thmc}{Theorem C}
\theoremstyle{plain}\newtheorem*{thmd}{Theorem D}
\theoremstyle{plain}\newtheorem*{core}{Corollary E}

\newcommand{\lb}{\overline{\mathcal{L}}}   
\newcommand{\llb}{\overline{\mathcal{L'}}}
\newcommand{\ob}{\overline{\mathcal{O}}}
\newcommand{\eb}{\overline{\mathcal{E}}}
\newcommand{\fb}{\overline{\mathcal{F}}}

\newcommand{\CL}{{\mathcal{L}}}
\newcommand{\CO}{{\mathcal{O}}}
\newcommand{\CE}{{\mathcal{E}}}

\newcommand{\mb}{\overline{M}}            
\newcommand{\okb}{\overline{O}_K}

\newcommand{\omegaxb}{\overline\omega_X}

\newcommand{\Ar}{{\mathrm{Ar}}}
\newcommand{\Fal}{{\mathrm{Fal}}}

\newcommand{\norm}{\|\cdot\|}

\newcommand{\CC}{{\mathbb{C}}}
\newcommand{\QQ}{{\mathbb{Q}}}
\newcommand{\RR}{{\mathbb{R}}}
\newcommand{\ZZ}{{\mathbb{Z}}}

\newcommand{\hhat}{\widehat h^0}                
\newcommand{\Hhat}{\widehat H^0}
\newcommand{\hsefhat}{\widehat h^0_{\rm sef}}
\newcommand{\Hsefhat}{\widehat H^0_{\rm sef}}

\newcommand{\HPic}{\widehat {\rm Pic}}
\newcommand{\Spec}{\mathrm{Spec}}

\newcommand{\vol}{\mathrm{vol}}             
\newcommand{\volhat}{\widehat{\mathrm{vol}}}             

\newcommand{\chern}{\widehat c_1}           

\tableofcontents

\section{Introduction}

The results of this paper lie in the arithmetic intersection theory of Arakelov,
Faltings and Gillet--Soul\'e.

We prove effective upper bounds on the number of effective
sections of a hermitian line bundle over an arithmetic surface.
The first two results are respectively for general arithmetic divisors
and for nef arithmetic diviors. They can be viewed as effective versions
of the arithmetic Hilbert--Samuel formula.

The third result improves the upper bound substantially for special nef line
bundles, which particularly includes the Arakelov canonical bundle. As a
consequence, we obtain
effective lower bounds on the Faltings height and on the self-intersection of
the canonical bundle in terms of the number of singular points on fibers of the
arithmetic surface. It recovers a result of Bost.

Throughout this paper, $K$ denotes a number field,
and $X$ denotes a regular and geometrically connected arithmetic surface of
genus $g$ over $O_K$ .
That is, $X$ is a two-dimensional regular scheme, projective and flat over
$\Spec
(O_K)$, such that $X_{\overline K}$ is connected curve of genus $g$.

\subsection{Effective bound for arbitrary line bundles}

By a \emph{hermitian line bundle} over $X$, we mean a pair $\lb=(\mathcal L, \|
\cdot \|)$, where
$\CL$ is an invertible sheaf over $X$, and $\| \cdot \|$ is a \emph{continuous
metric}
on the line bundle $\mathcal L(\mathbb C)$ over $X(\mathbb C)$, invariant under
the complex conjugation.

For any hermitian line bundle $\lb=(\mathcal L, \| \cdot \|)$ over $X$, denote
$$
\Hhat(\lb)= \{ s \in H^0(X, \mathcal L): \| s \|_{\sup} \le 1\}.
$$
It is the set of \emph{effective sections}. Define
$$
\hhat(\lb) = \log \# \Hhat(\lb).
$$
and
$$
\volhat(\lb)= \limsup_{n\to \infty} \frac{2}{n^2} \hhat(n\lb).
$$
Here we always write tensor product of (hermitian) line bundles additively, so
$n\lb$ means $\lb^{\otimes n}$.

By Chen \cite{Ch}, the ``limsup'' in the right-hand side is actually a limit.
Thus we have the expansion
$$
\hhat(n\lb)=  \frac{1}{2}\ \volhat(\lb)\ n^2+o(n^2), \quad
n\to\infty.
$$
The first main theorem of this paper is the following \emph{effective} version
of the above expansion in one direction.

\begin{thma}

Let $X$ be a regular and geometrically connected arithmetic surface of genus $g$
over $O_K$. Let $\lb$ be a hermitian line bundle on $X$.
Denote $d^\circ=\deg (\CL_{K})$, and denote by
$r'$ the $O_K$-rank of the $O_K$-submodule of $H^0(\CL)$ generated by
$\Hhat(\lb)$.
Assume $r'\geq 2$.
\begin{itemize}
\item[(1)] If $g>0$, then
$$
\hhat(\lb) \le \frac{1}{2}\ \volhat(\lb)+  4d \log (3d).
$$
Here $d=d^\circ[K:\QQ]$.
 \item[(2)] If $g=0$, then
$$\hhat(\lb)
\leq (\frac12+\frac{1}{r'-1})\ \volhat(\lb)+ 4r \log (3r).
$$
Here $r=(d^\circ+1)[K:\QQ]$.
\end{itemize}

\end{thma}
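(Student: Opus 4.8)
The plan is to derive the statement from a comparison between $\hhat(\lb)$ at level one and the asymptotic count $\volhat(\lb)=\lim_{n}\tfrac{2}{n^{2}}\hhat(n\lb)$ (Chen's limit): since $\tfrac12\volhat(\lb)=\lim_{n}\tfrac1{n^{2}}\hhat(n\lb)$, it suffices to produce, out of $\Hhat(\lb)$, enough effective sections of $n\lb$ to force $\hhat(n\lb)\ge\big(\tfrac{2e}{r'}\hhat(\lb)-O(d\log d)\big)\tfrac{n^{2}}{2}$ along some sequence $n\to\infty$, where $e$ is the degree of an auxiliary plane curve introduced below; for $g=0$ this will give the stated coefficient, and for $g>0$ one improves it to $\tfrac12$. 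I would first reduce: let $M\subseteq H^{0}(\CL)$ be the $O_{K}$-submodule generated by $\Hhat(\lb)$, so that $M$ has rank $r'$ and $\Hhat(\lb)=\{s\in M:\ \|s\|_{\sup}\le 1\}$ is exactly the set of lattice points of $M$ in the unit ball $B$ of $\norm$; over the generic fiber this gives a linear system $V=M\otimes_{O_{K}}K\subseteq H^{0}(X_{K},\CL_{K})$ of dimension $r'$, hence (as $r'\ge 2$) a non-constant rational map $\phi\colon X_{K}\dashrightarrow\mathbb P(V)$ whose image is a curve of some degree $e$ with $r'-1\le e\le d^{\circ}$. All of this applies verbatim to $n\lb$ and to the submodule $M_{n}\subseteq H^{0}(n\CL)$ spanned by the products $s_{1}\cdots s_{n}$ with $s_{i}\in\Hhat(\lb)$, which contains the image of $M^{\otimes n}$ and all of whose generators lie in the unit ball of $n\lb$.

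For the lower bound on $\hhat(n\lb)$ I would fix a reduced basis of $M$ realizing the successive minima $\mu_{1}\le\cdots\le\mu_{r'[K:\QQ]}$ of $(M,\norm)$ — so that, by Minkowski-type lattice counting, $\hhat(\lb)$ agrees with $\sum_{i}\log(1/\mu_{i})$ up to an additive error of order $(\operatorname{rank}M)\log(\operatorname{rank}M)$ — and then consider the degree-$n$ monomials in this basis. Each is an effective section of $n\lb$ whose sup-norm is bounded by the corresponding product of $\mu_{i}$'s, and exactly $\dim\big(\text{degree-}n\text{ part of the section ring of }V\big)\sim ne$ of them are linearly independent in $H^{0}(n\CL_{K})$. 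Counting the integral linear combinations of a well-chosen linearly independent family of smallest-norm monomials that still satisfy $\norm\le 1$ amounts to counting lattice points in a weighted cross-polytope; its logarithm has leading term $\tfrac12 n^{2}A$ — the factor $\tfrac12$ being precisely the ``triangular'' shape $\sum_{j\le n}j$ — with $A$ of size comparable to $\tfrac{2e}{r'}\sum_{i}\log(1/\mu_{i})$. Dividing by $n^{2}/2$ and letting $n\to\infty$ yields $\volhat(\lb)\ge A$, hence $\hhat(\lb)\le\tfrac{r'}{2e}\volhat(\lb)+(\text{error})$; for $g=0$ the image curve is rational, so $e=r'-1$, and the careful count (taking into account the base locus of $V$ on $X_{K}$ and the discrepancy between the arithmetic and geometric bases) yields the stated $\tfrac12+\tfrac1{r'-1}$. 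For $g>0$ one must do strictly better, and here Clifford's theorem on the genus-$g$ generic fiber is the essential extra input: it forces the full section ring $\bigoplus_{n}H^{0}(n\CL_{K})$ to be much larger in each degree than the sub-ring generated by $V$, and — the delicate part — one shows that sufficiently many of these extra sections of $n\lb$ are again effective, so that the relevant lattice's rank grows like $nd^{\circ}$ rather than $ne$, improving the coefficient to the optimal $\tfrac12$.

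Finally I would collect the error terms. The constant $4d\log(3d)$, respectively $4r\log(3r)$ with $r=(d^{\circ}+1)[K:\QQ]$, is the sum of three contributions of the same order: the loss in comparing $\hhat(\lb)$ with $\sum_{i}\log(1/\mu_{i})$ and, symmetrically, in the cross-polytope count (Minkowski-type counting, bounded by a rank times a logarithm, the ranks being at most $d$ for $g>0$ and at most $r$ for $g=0$); the $o(n^{2})$ term in the asymptotic lower bound for $\hhat(n\lb)$, made explicit as $O(\operatorname{rank}\cdot\log n)$ and absorbed in the limit; and the passage from $\CL$ to the subsheaf generated by $\Hhat(\lb)$. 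The main obstacle is the second step when $g>0$: extracting the factor $\tfrac12$ (a genuine triangle, not a square) for a line bundle that need not be nef, while keeping every archimedean contribution under control — in particular controlling the sup-norms of the ``extra'' sections furnished by Riemann--Roch and Clifford on the generic fiber, and reconciling the basis of $M$ adapted to the successive minima with the one adapted to the image curve of $V$. This is exactly where the genus hypothesis enters and where the $g=0$ and $g>0$ cases genuinely part ways.
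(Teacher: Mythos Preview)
Your approach is genuinely different from the paper's, and the $g>0$ case contains a real gap that I do not see how to close along your lines.

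The paper does not attempt to manufacture effective sections of $n\lb$ out of $\Hhat(\lb)$ and pass to the limit. Instead it first strips off the fixed part of $\Hhat(\lb)$: one writes $\lb=\lb_1+\eb$ with $\eb$ effective (carried by the codimension-one base locus of $\Hhat(\lb)$) and $\lb_1$ \emph{nef}, in such a way that tensoring by the distinguished section $e\in\Hhat(\eb)$ gives a norm-preserving bijection $\Hhat(\lb_1)\to\Hhat(\lb)$. This yields $\hhat(\lb)=\hhat(\lb_1)$ and, via the injections $\Hhat(n\lb_1)\hookrightarrow\Hhat(n\lb)$, also $\volhat(\lb)\ge\volhat(\lb_1)=\lb_1^2$. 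Theorem~A is then immediate from Theorem~B applied to the nef bundle $\lb_1$; Theorem~B itself is proved by an iterated version of the same decomposition (shrink the metric to the absolute minimum, peel off the new fixed part, repeat), with the error terms controlled all at once by Gillet--Soul\'e successive minima rather than step by step. Clifford's theorem plays no role here; it enters only in Theorem~C, for special line bundles.

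Your monomial construction can certainly produce $\sim ne$ linearly independent effective sections of $n\lb$, where $e$ is the degree of the image curve of the linear system $V$, and this plausibly yields a bound of the shape $\hhat(\lb)\le\tfrac{r'}{2e}\volhat(\lb)+O(d\log d)$. For $g=0$ that is in the right ballpark. For $g>0$, however, the coefficient $\tfrac{r'}{2e}$ is in general strictly larger than $\tfrac12$, and your proposed fix---``sufficiently many of these extra sections of $n\lb$ are again effective''---is exactly the missing step. Riemann--Roch (not Clifford: for large $n$ the bundle $n\CL_K$ is not special) does guarantee that $h^0(n\CL_K)\sim nd^{\circ}$, but those additional sections live only on the generic fiber and come with no control whatsoever on their supremum norms; there is no mechanism in your outline for extending them to integral sections of small norm. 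Producing such sections is essentially the content of an effective arithmetic Hilbert--Samuel formula, which is precisely what Theorem~B establishes---and it does so for nef bundles by a completely different route. Without a concrete way to bound $\|\cdot\|_{\sup}$ on the complement of the monomial span, the $g>0$ argument does not go through.
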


\subsection{Effective bound for nef line bundles}

Theorem A will be reduced to the case of nef hermitian line bundles.

Recall that a hermitian line bundle $\lb$ over $X$ is \emph{nef} if it satisfies
the following conditions:
\begin{itemize}
\item $\widehat\deg(\lb|_Y) \ge 0$ for any integral subscheme $Y$ of codimension
one in $X$.
\item The metric of $\lb$ is semipositive, i.e., the curvature current of $\lb$
on $X(\CC)$
is semipositive.
\end{itemize}
The conditions imply $\deg (\mathcal L_{K}) \ge 0$.
They also imply that the self-intersection number $\lb^2\geq 0$.
It is a consequence of \cite[Theorem 6.3]{Zh1}. See also \cite[Proposition
2.3]{Mo1}.

The arithmetic nefness is a direct analogue of the nefness in algebraic
geometry. It generalizes the arithmetic ampleness of S. Zhang \cite{Zh1}, and
serves as the limit notion of the arithmetic ampleness.
In particular, a nef hermitian line bundle $\lb$ on $X$ satisfies the
following properties:
\begin{itemize}
\item The degree $\deg (\CL_{K}) \ge 0$, which follows from the definition.
\item The self-intersection number $\lb^2\geq 0$.
It is a consequence of \cite[Theorem 6.3]{Zh1}. See also \cite[Proposition
2.3]{Mo1}.
\item It satisfies the arithmetic Hilbert--Samuel formula
$$
\hhat(n \lb) = \frac{1}{2} n^2\lb^2 + o(n^2), \quad n\to \infty.
$$
Therefore, $\volhat(\lb)=\lb^2$. If $\lb$ is ample in the sense of S. Zhang, the
formula is due to
Gillet--Soul\'e and S. Zhang. See \cite[Corollary 2.7]{Yu1} for more details.
It is extended to nef case by Moriwaki \cite{Mo2} by the continuity
of the volume function.
\end{itemize}

The following result is an \emph{effective} version of the Hilbert--Samuel
formula in one direction.

\begin{thmb}

Let $X$ be a regular and geometrically connected arithmetic surface of genus $g$
over $O_K$.
Let $\lb$ be a nef hermitian line bundle on $X$ with $d^\circ=\deg (\CL_{K})>0$.
\begin{itemize}
\item[(1)] If $g>0$ and $d^\circ>1$, then
$$
\hhat(\lb) \le \frac{1}{2} \lb^2+  4d \log (3d).
$$
Here $d=d^\circ[K:\QQ]$.
 \item[(2)] If $g=0$ and $d^\circ>0$, then
$$\hhat(\lb)
\leq (\frac12+\frac{1}{d^\circ})\lb^2+ 4r \log (3r).
$$
Here $r=(d^\circ+1)[K:\QQ]$.
\end{itemize}

\end{thmb}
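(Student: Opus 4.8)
The plan is to mimic, in the arithmetic setting, the proof that a nef line bundle $L$ on a smooth projective surface satisfies $h^0(L)\le\frac12 L^2+O(\deg L)$: restrict to a curve $C\in|L|$, apply Riemann--Roch on $C$, and note via adjunction that the genus of $C$ is $\approx\frac12 L^2$, so that $h^0(L|_C)\approx\deg(L|_C)-g(C)\approx L^2-\frac12 L^2$. Here the factor $\tfrac12$ comes entirely from the adjunction formula. The arithmetic analogue replaces Riemann--Roch on $C$ by Minkowski's lattice-point estimate on the arithmetic curve, which carries an extra term $-\tfrac12\log|d_F|$; the arithmetic adjunction (Riemann--Hurwitz) formula turns this logarithmic discriminant into intersection numbers, and it is precisely the failure of $\omegaxb$ to be nef when $g=0$ that produces the extra summand $\tfrac1{d^\circ}\lb^2$.

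First I would dispose of the trivial case: if $\Hhat(\lb)=\{0\}$ then $\hhat(\lb)=0$, so fix a nonzero $s_0\in\Hhat(\lb)$, chosen of maximal supremum norm, and put $D=\mathrm{div}(s_0)$, so that $\CL=\CO_X(D)$, $\|s_0\|_{\sup}\le1$, and $D=D_h+D_v$ with $D_h$ horizontal (nonzero, since $d^\circ>0$) and $D_v$ vertical. The engine is the exact sequence
\[
0\longrightarrow\CO_X\xrightarrow{\,\cdot s_0\,}\CL\longrightarrow\CL|_D\longrightarrow0 .
\]
Every $t\in\Hhat(\lb)$ restricts to $t|_D$ with $\|t|_D\|_{\sup}\le\|t\|_{\sup}\le1$ (the supremum over the finite set $D(\CC)$ only decreases), and two elements of $\Hhat(\lb)$ with the same restriction differ by an $O_K$-multiple of $s_0$; hence
\[
\hhat(\lb)\ \le\ \log\#\{\,x\in O_K:\|x s_0\|_{\sup}\le1\,\}\ +\ \hhat\!\left(\lb|_D\right),
\]
the first term being $O\!\left([K:\QQ]\log3\right)$ by Minkowski (using maximality of $\|s_0\|_{\sup}$), and $\lb|_D$ the induced hermitian line bundle on the one-dimensional arithmetic scheme $D$. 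I would then bound $\hhat(\lb|_D)$ by treating the horizontal and vertical parts separately, the vertical contribution being absorbed into the error by nefness ($\lb\cdot[D_v]\ge0$).

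For the horizontal part, let $\nu\colon\coprod_i\Spec O_{F_i}\to D_{h,\mathrm{red}}$ be the normalization, where $\sum_i[F_i:\QQ]\le d^\circ[K:\QQ]=d$ once multiplicities are taken into account. Minkowski's theorem gives, for a hermitian line bundle $\overline N$ on $\Spec O_F$,
\[
\hhat\!\left(\overline N\right)\ \le\ \widehat\deg\!\left(\overline N\right)-\tfrac12\log|d_F|+c\,[F:\QQ],
\]
whenever the right side is nonnegative (and $\hhat=0$ otherwise). Organizing the restriction as a successive one (a filtration of $\CL|_{D_h}$) and summing the $\widehat\deg$-terms over the components reproduces, up to archimedean Green's-function corrections, the intersection number $\lb\cdot[D_h]=\lb^2-\lb\cdot[D_v]$; summing the terms $\tfrac12\log|d_F|$ and converting them by the arithmetic adjunction formula — so that the self-intersection contributions of the successively removed curves cancel against those appearing in $\lb\cdot[D_h]$ — yields $\tfrac12\bigl(\lb\cdot\omegaxb+\lb^2\bigr)$ up to $O(d\log(3d))$, the error including the harmless term $\tfrac12 d^\circ\log|d_K|$. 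Subtracting,
\[
\hhat(\lb)\ \le\ \lb^2-\tfrac12\bigl(\lb\cdot\omegaxb+\lb^2\bigr)+O\!\left(d\log(3d)\right)\ =\ \tfrac12\lb^2-\tfrac12\,\lb\cdot\omegaxb+O\!\left(d\log(3d)\right).
\]
For $g>0$ the Arakelov canonical bundle $\omegaxb$ is nef, so $\lb\cdot\omegaxb\ge0$ and part~(1) follows with $d=d^\circ[K:\QQ]$. For $g=0$, $\omegaxb$ is not nef and $-\tfrac12\lb\cdot\omegaxb$ may be positive; using $\deg(\omega_{X,K})=-2$, the arithmetic Hodge index theorem, and a careful choice of $s_0$, one bounds it by $\tfrac1{d^\circ}\lb^2+O(r\log(3r))$ with $r=(d^\circ+1)[K:\QQ]$, giving part~(2).

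The step I expect to be the main obstacle is controlling, with an error of the precise shape $O(d\log(3d))$ (resp. $O(r\log(3r))$), all the corrections incurred in passing from the restriction $\lb|_{D_h}$ to the clean intersection numbers: the archimedean Green's functions, the non-normality and non-reducedness of $D_h$, the multiplicities, the self-intersection $[D_v]^2$ of the vertical part, and the residual term coming from the degree-zero hermitian line bundle left after removing the horizontal divisor, to which the arithmetic Hodge index theorem applies. The exact-sequence set-up, the Minkowski estimates on the arithmetic curves, and the adjunction bookkeeping on the generic fibre are, by contrast, routine. This also shows where the hypotheses are used: $d^\circ>1$ for $g>0$ ensures $D_h$ has generic degree $\ge2$ so that the restriction argument has content, and $d^\circ>0$ for $g=0$ keeps $\CL_K$ effective.
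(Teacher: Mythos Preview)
Your approach is fundamentally different from the paper's, and it has a genuine gap that I do not see how to close under the stated hypotheses.

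The paper never touches $\omegaxb$. Instead it runs an iterative reduction: starting from $\lb$, subtract the absolute minimum to get $\lb'=\lb(-e_{\lb})$ (still nef, now with absolute minimum zero), then peel off the fixed part of $\Hsefhat(\lb')$ to obtain a new nef $\lb_1$ of strictly smaller generic degree. Iterating produces a chain $\lb=\lb_0,\lb_1,\ldots,\lb_n$, and the crucial point is that the strictly effective sections along the way all embed isometrically back into $(H^0(\CL),\|\cdot\|_{\sup})$. This allows a \emph{single} successive-minima estimate (their Proposition~2.2) to be applied to the whole tower at once, which is what keeps the error at $O(d\log d)$ rather than the $O(d^2)$ one would get by summing step-by-step errors. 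The dichotomy between $g>0$ and $g=0$ enters only through the elementary curve inequality $h^0(\CL_{i,K})\le\deg(\CL_{i,K})$ versus $h^0(\CL_{i,K})=\deg(\CL_{i,K})+1$; for $g=0$ the extra $\kappa\sum c_i$ is then bounded by $\frac{1}{d^\circ}\lb^2$ via a direct intersection-number calculation (their Lemma~4.3), not via Hodge index.

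Your restriction-plus-adjunction strategy hinges on two ingredients that are not available under the hypotheses of Theorem~B. First, you need $\lb\cdot\omegaxb\ge 0$ to drop the cross term when $g>0$, but $\omegaxb$ is known to be nef only when $X$ is \emph{semistable} over $O_K$; Theorem~B assumes only regularity, and on a regular non-semistable (or non-minimal) model $\omegaxb$ can have negative degree on vertical components. Second, the ``arithmetic adjunction formula'' you invoke to convert $\sum\frac12\log|d_{F_i}|$ into $\frac12(\lb\cdot\omegaxb+\lb^2)$ is, in the form you need, tied to the Arakelov metric and is clean only for sections on a semistable model; for an arbitrary horizontal multisection on a merely regular $X$ there is no such identity, and the discrepancies (singularities of $D_h$, multiplicities, the metric placed on $\CO(D_h)$, archimedean Green terms, the vertical self-intersection $D_v^2$) are not bookkeeping but the entire content of the argument. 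You flag this yourself as ``the main obstacle,'' and it is: these terms are not $O(d\log(3d))$ in any obvious way. Even your account of where $d^\circ>1$ enters is off; in the paper it is what guarantees $r_0\le d_0$ at the very first step when $g>0$, not a condition for the restriction to have content.

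So while your outline captures a natural Riemann--Roch heuristic for the leading term, as a proof of Theorem~B in the stated generality it does not go through.
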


The theorem is new even in the case that $\lb$ is ample.
It is not a direct consequence of the arithmetic Riemann--Roch theorem of Gillet
and Soul\'e, due to difficulties on effectively estimating the
analytic torsion and the contribution of $H^1(\CL)$.

Theorem B is a special case of Theorem A under slightly weaker assumptions,
but Theorem B actually implies Theorem A.
To obtain Theorem A, we decompose
$$
\lb=\lb_1+ \eb
$$
with a nef hermitian line bundle $\lb_1$ and and an effective hermitian line
bundle $\eb$,
which induces a bijection $\Hhat(\lb_1)\to \Hhat(\lb)$.
The effectivity of $\eb$ also gives
$\volhat(\lb)\geq \volhat(\lb_1)=\lb_1^2$.
Then the result is obtained by applying Theorem B to $\lb_1$.
See Theorem \ref{degred0}.

The above implication is inspired by the arithmetic Zariski decomposition
of Moriwaki \cite{Mo3}.
Note that Moriwaki's decomposition is a canonical decomposition of $\lb$
of the above form, except that his
$\lb_1$ and $\eb$ usually lie in $\widehat{\mathrm{Pic}}(X)\otimes _{\ZZ}\RR$
instead of
$\widehat{\mathrm{Pic}}(X)$.
One can also extend our result
to $\widehat{\mathrm{Pic}}(X)\otimes _{\ZZ}\RR$
so that Moriwaki's decomposition is applicable.

The following are some consquences and generalities related to the theorems:
\begin{itemize}
\item In the setting of Theorem B, for $\deg(\CL_{K})=1$ and any genus $g\geq
0$, we can bound $\hhat(\lb)$ in terms of $\lb^2$ (with coefficient 1). See
Proposition \ref{deg1}.
\item In both theorems, the assumption that $X$ is regular can be removed by the
resolution of singularity proved by Lipman \cite{Li}.
\item As mentioned above, both theorems can be generalized to $\widehat{\mathrm{Pic}}(X)\otimes
_{\ZZ}\RR$, or equivalently to the group of arithmetic $\RR$-Cartier divisors of
$C^0$-type in the sense of Moriwaki \cite{Mo3}. Furthermore, the results can
also be extended to the setting of adelic line bundles in the sense of S. Zhang
\cite{Zh3}.
\item The theorems easily induce upper bounds for the Euler characteristic
$$\chi_{\sup}(\lb)=\log
\frac{\mathrm{vol}(B_{\sup}(\lb))}{\mathrm{vol}(H^0(X,\CL)_{\RR}/H^0(X,\CL))}.
$$
Here $B_{\sup}(\lb)$ is the unit ball in $H^0(X,\CL)_{\RR}$ bounded by the
supremum norm $\norm_{\sup}$.
In fact, Minkowski's theorem gives
$$
\chi_{\sup}(\lb) \leq \hhat(\lb)+d\log 2.
$$
The bounds are ``accurate'' if $\lb$ is nef.

\end{itemize}

\subsection{Effective bound for special line bundles}

Theorem B is very accurate when $\deg(\CL_K)$ is large by the arithmetic
Hilbert--Samuel formula.
However, it may be too weak if $\deg(\CL_K)$ is very small.
Here we present a substantial improvement of Theorem B for special line bundles,
and consider the application to the Arakelov canonical bundle.

Recall that a line bundle $L$ on a projective and smooth curve over a field
is \emph{special} if both $h^0(L)>0$ and $h^1(L)>0.$
In particular, the canonical bundle is special if the genus is positive.
The following is the improvement of Theorem B in the special case.
One can easily obtain the improvement of Theorem A along the line.

\begin{thmc}
Let $X$ be a regular and geometrically connected arithmetic surface of genus
$g>1$ over $O_K$.
Let $\lb$ be a nef hermitian line bundle on $X$ with $d^\circ=\deg (\CL_{K})>1$.
Assume that $\CL_K$ is a special line bundle on $X_K$.
Then
$$
\hhat(\lb) \le (\frac{1}{4}+\frac{\varepsilon}{2 d^\circ}) \lb^2+  4d \log (3d).
$$
Here $d=d^\circ[K:\QQ]$. The factor $\varepsilon=2$ if $X_K$ is hyperelliptic,
and
$\varepsilon=1$ if $X_K$ is not hyperelliptic.
\end{thmc}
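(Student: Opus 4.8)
The strategy is to reduce Theorem C to Theorem B applied to an auxiliary nef line bundle of smaller degree, exploiting the speciality of $\CL_K$. Since $\CL_K$ is special on $X_K$, Clifford's theorem gives $h^0(\CL_K)\le \frac{d^\circ}{2}+1$, and more precisely the geometry of special divisors on $X_K$ restricts how the linear series $|\CL_K|$ behaves. The point is that for a special line bundle the sections move in a family that is "half the expected size," so one should be able to split off a vertical or effective piece and reduce the arithmetic degree by roughly a factor of two before applying the Hilbert–Samuel estimate of Theorem B. Concretely, I would look for a decomposition $n\lb = \lb_1 + \eb$ (for suitable $n$, or after passing to a twist) where $\lb_1$ is nef of generic degree roughly $d^\circ/2$ and $\eb$ is effective with $\Hhat(\lb_1)\to\Hhat(n\lb)$ a bijection, so that $\volhat(\lb)\ge \lb_1^2/n^2$; feeding $\lb_1$ into Theorem B then produces the coefficient $\frac14$ in front of $\lb^2$ instead of $\frac12$.

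The first key step is the geometric input over $X_K$: analyze the map $\phi:X_K\to \mathbb{P}^{h^0(\CL_K)-1}_K$ defined by $|\CL_K|$ (or its base-point-free part). When $X_K$ is not hyperelliptic, $\phi$ is generically injective onto its image, which forces $h^0(\CL_K)\le \frac{d^\circ}{2}+\frac12$-type bounds and lets one peel off a pencil; when $X_K$ is hyperelliptic, $\CL_K$ factors (up to an effective divisor) through the $g^1_2$, which accounts for the loss reflected in $\varepsilon=2$. The second step is to make this splitting work integrally over $O_K$: one chooses an integral model so that the associated subsheaf $\CL_1\subset \CL$ is a line bundle on $X$, and equips it with the metric induced from $\lb$ (or the quotient metric on $\eb$), checking that $\lb_1$ is still nef — here one uses that $\lb$ is nef and that the complementary divisor is effective, together with the semipositivity of the induced metric. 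The third step is purely formal: apply Theorem B(1) to $\lb_1$ (whose generic degree is $>1$, using $d^\circ>1$ and the genus hypothesis), and convert the bound $\frac12\lb_1^2 + 4d_1\log(3d_1)$ back into a bound for $\hhat(\lb)$ using $\hhat(\lb)=\hhat(\lb_1)$ and $\lb_1^2\le \frac{\varepsilon}{2}\lb^2 + (\text{lower order})$, with the error term $4d\log(3d)$ absorbing the contributions of $\eb$ and the arithmetic of the splitting.

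The main obstacle I anticipate is controlling the arithmetic self-intersection $\lb_1^2$ in terms of $\lb^2$: passing from the generic-fiber inequality $\deg(\CL_{1,K})\approx \frac12\deg(\CL_K)$ to an inequality between the full arithmetic intersection numbers requires bounding the contribution of the vertical and archimedean correction terms in $\lb^2 = \lb_1^2 + 2\,\lb_1\cdot\eb + \eb^2$, and showing the cross terms $\lb_1\cdot\eb$ and $\eb^2$ have the right sign or are of lower order. This is where the factor $\varepsilon$ and the hyperelliptic dichotomy really enter, and where one must be careful about base points of $|\CL_K|$ and about fibral components of the complementary divisor; the effective error $4d\log(3d)$ should be exactly what is needed to swallow these, since it already appears in Theorem B. A secondary technical point is ensuring the bijection $\Hhat(\lb_1)\cong\Hhat(\lb)$ — this is handled as in the reduction of Theorem A to Theorem B (Theorem \ref{degred0} in the excerpt), adapted so that the split-off piece $\eb$ is the "base locus" part of $\CL_K$ rather than the full Zariski-decomposition negative part.
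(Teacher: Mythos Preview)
Your proposal has a genuine gap: the single ``halving'' decomposition $\lb=\lb_1+\eb$ you seek, with $\deg(\CL_{1,K})\approx d^\circ/2$, $\eb$ effective, and $\Hhat(\lb_1)\to\Hhat(\lb)$ a bijection, does not exist in general. The bijection in Theorem~\ref{degred0} works precisely because $\mathrm{div}(e)$ is the \emph{fixed part} of $\Hhat(\lb)$, so every effective section is divisible by $e$; but the fixed part of $\Hhat(\lb)$ may well have generic degree $0$, not $d^\circ/2$. Clifford's theorem bounds $h^0(\CL_K)$, not the size of the base locus, so speciality alone gives you no leverage here. And even granting the decomposition, your needed inequality $\lb_1^2\le\frac12\lb^2$ is not a consequence of $\deg(\CL_{1,K})\approx\frac12\deg(\CL_K)$: the arithmetic self-intersection is governed by heights, not just generic degree, so a line bundle of half the degree can have nearly the full $\lb^2$.

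The paper does \emph{not} reduce Theorem~C to Theorem~B as a black box. Instead it re-enters the iterative machinery (Theorem~\ref{degred}) that underlies the proof of Theorem~B, producing the full sequence $(\lb_i,\eb_i,c_i)_{i=0}^n$. Proposition~\ref{onestep} then gives
\[
\lb^2\ge\lb_n'^2+2\sum_i d_i c_i,\qquad
\hsefhat(\lb)\le\hsefhat(\lb_n')+\sum_i r_i c_i+\text{(error)}.
\]
In Theorem~B one used $r_i\le d_i$ (valid for $g>0$) to compare these sums. The point of Theorem~C is that each $\CL_{i,K}$ is still special (it has sections, and it embeds in the special $\CL_K$ with effective cokernel), so Clifford's theorem gives the sharper bound $r_i\le\frac12 d_i+\kappa$ (or $\frac12 d_i+\frac12\kappa$ off the endpoints in the non-hyperelliptic case). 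This factor of $2$ at \emph{every step} turns $\frac12\lb^2$ into $\frac14\lb^2$, and the residual $\kappa\sum_i c_i$ is controlled by Lemma~\ref{sumci}, $\sum_i c_i\le\lb^2/d_0$, producing the $\varepsilon/(2d^\circ)$ term. So the improvement comes from feeding Clifford into the filtration, not from a single clever splitting.
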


The most interesting case of Theorem C happens when $\lb$ is the canonical
bundle.
Following \cite{Ar}, let $\overline \omega_X=(\omega_X, \norm_{\Ar})$
be the Arakelov canonical bundle of $X$ over $O_K$.
That is, $\omega_X=\omega_{X/O_K}$ is the relative dualizing sheaf of $X$ over
$O_K$ and $\norm_\Ar$ is the Arakelov metric on $\omega_X$.
By Faltings \cite{Fa}, $\overline \omega_X$ is nef if $X$ is semistable over
$O_K$.

\begin{thmd}
Let $X$ be a semistable regular arithmetic surface of genus $g>1$ over $O_K$.
Then
$$
\hhat(\omegaxb) \le \frac{g+\varepsilon-1}{4(g-1)} \omegaxb^2+  4d \log (3d).
$$
Here $d=(2g-2)[K:\QQ]$.
The factor $\varepsilon=2$ if $X_K$ is hyperelliptic, and
$\varepsilon=1$ if $X_K$ is not hyperelliptic.
\end{thmd}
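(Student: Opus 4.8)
The plan is to deduce Theorem D directly from Theorem C applied to the hermitian line bundle $\lb=\overline\omega_X$. The work consists almost entirely of checking that $\overline\omega_X$ satisfies the three hypotheses of Theorem C, after which the inequality follows by a one-line simplification of the coefficient.

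First I would verify nefness: since $X$ is semistable over $O_K$, Faltings's theorem (cited above) gives that $\overline\omega_X$ is a nef hermitian line bundle. Next, on the generic fiber, $\CL_K=\omega_{X_K}$ is the canonical bundle of the smooth projective geometrically connected genus-$g$ curve $X_K$, so
$$
d^\circ=\deg(\omega_{X_K})=2g-2.
$$
As $g>1$ we have $d^\circ=2g-2\ge 2>1$, so the hypothesis $d^\circ>1$ holds, and moreover $d=d^\circ[K:\QQ]=(2g-2)[K:\QQ]$, matching the constant in the statement. Finally, $\omega_{X_K}$ is special in the sense recalled before Theorem C: indeed $h^0(\omega_{X_K})=g>0$ and, by Serre duality, $h^1(\omega_{X_K})=h^0(\CO_{X_K})=1>0$.

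With all hypotheses of Theorem C verified, applying it to $\lb=\overline\omega_X$ yields
$$
\hhat(\omegaxb)\le\Bigl(\frac14+\frac{\varepsilon}{2d^\circ}\Bigr)\,\omegaxb^2+4d\log(3d),
$$
where $\varepsilon=2$ if $X_K$ is hyperelliptic and $\varepsilon=1$ otherwise; this dichotomy is inherited verbatim, since $X_K$ is exactly the genus-$g$ curve referred to in Theorem C. Substituting $d^\circ=2g-2$ into the coefficient gives
$$
\frac14+\frac{\varepsilon}{2(2g-2)}=\frac14+\frac{\varepsilon}{4(g-1)}=\frac{(g-1)+\varepsilon}{4(g-1)}=\frac{g+\varepsilon-1}{4(g-1)},
$$
which is precisely the coefficient in Theorem D, completing the deduction.

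Since Theorem D is thus a formal corollary of Theorem C, there is no genuine obstacle in this step; the whole content lies in the proof of Theorem C (hence ultimately of Theorem B). The only points needing care are bookkeeping: computing the degree $d^\circ$ on the generic fiber $X_K$ rather than on some special fiber, and confirming that it is the semistability hypothesis which makes $\overline\omega_X$ nef via Faltings, exactly the input Theorem C requires. One could instead unwind the proof of Theorem C in this special case to obtain a self-contained argument, but nothing would be gained by doing so.
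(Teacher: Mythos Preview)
Your proposal is correct and follows exactly the same approach as the paper, which simply states that ``Theorem D is a special case of Theorem C by Faltings's result that $\overline\omega_X$ is nef on $X$.'' You have in fact supplied more detail than the paper does, spelling out the verification that $\omega_{X_K}$ is special and the arithmetic of the coefficient.
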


Next we state a consequence of the theorem.
Recall from Faltings \cite{Fa} that $\chi_{\rm Fal}(\omega_X)$
is defined as the arithmetic degree of
the hermitian $O_K$-module $H^0(X,\omega_X)$ endowed with the natural metric
$$
\|\alpha\|_{\rm nat}^2 = \frac i2 \int_{X(\CC)} \alpha \wedge \overline\alpha,
\quad
\alpha\in H^0(X(\CC),\Omega_{X(\CC)}^1).
$$
It is usually called the Faltings height of $X$.
The arithmetic Noether formula proved by Faltings (cf. \cite{Fa, MB}) gives
$$
\chi_{\rm Fal}(\omega_X) = \frac{1}{12} (\overline\omega_X^2 +\delta_X)-\frac13
g[K:\QQ] \log(2\pi).
$$
Here the delta invariant of $X$ is defined by
$$
\delta_X= \sum_{v} \delta_v,
$$
where the summation is over all places $v$ of $K$.
If $v$ is non-archimedean, $\delta_v$ is just the product of $\log q_v$ with the
number of singular points on the fiber of $X$ above $v$. Here $q_v$ denotes the
cardinality of the residue
field of $v$. If $v$ is archimedean, $\delta_v$ is an invariant of the
corresponding Riemann surface.

To state the consequence, we introduce another archimedean invariant.
Let $M$ be a compact Riemann surface of genus $g\geq 1$.
There are two norms on $H^0(M, \Omega_{M}^1)$.
One is the canonical inner product $\|\cdot\|_{\rm nat}$, and the other one is
the supremum norm
$\|\cdot\|_{\sup}$ of the Arakelov metric $\|\cdot\|_{\Ar}$.
Denote by $B_{\rm nat}(\Omega_{M}^1)$ and $B_{\sup}(\Omega_{M}^1)$ the unit
balls in $H^0(M, \Omega_{M}^1)$
corresponding to $\norm_{\rm nat}$ and $\norm_{\sup}$.
Denote
$$
\gamma_M=\frac12 \log \frac{\vol(B_{\rm
nat}(\Omega_{M}^1))}{\vol(B_{\sup}(\Omega_{M}^1))}.
$$
The volumes are defined by choosing a Haar measure on $H^0(M, \Omega_{M}^1)$,
and the quotient does not depend on the choice of the Haar measure.

It is easy to see that both the invariants $\delta$ and $\gamma$ define
real-valued continuous functions
on the moduli space $M_g(\CC)$ of compact Riemann surfaces of genus $g$.

\begin{core}
Let $X$ be a semistable regular arithmetic surface of genus $g>1$ over $O_K$.
Denote
$$\gamma_{X_{\infty}}=\sum_{\sigma:K\hookrightarrow \CC}\gamma_{X_{\sigma}}.$$
Then
\begin{eqnarray*}
 \left(2+\frac{3\varepsilon}{g-1}\right)\overline\omega_X^2
&\geq& \delta_X - 12 \gamma_{X_{\infty}}
-3\ C(g,K),\\
\left(8+\frac{4\varepsilon}{g-1+\varepsilon}\right)\chi_{\rm Fal}(\omega_X)
&\geq& \delta_X - \frac{4(g-1)}{g-1+\varepsilon} \gamma_{X_{\infty}}- C(g,K).
\end{eqnarray*}
Here $\varepsilon$ and $d$ are as in Theorem D, and
$$C(g,K)= 2g\log |D_K|+18 d\log d+25 d,$$
where $D_K$ denotes the absolute discriminant of $K$.
\end{core}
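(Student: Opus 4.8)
The plan is to derive Corollary E formally from Theorem D and the arithmetic Noether formula, the only extra ingredients being Minkowski's theorem and the standard dictionary between the Euler characteristic of a hermitian $O_K$-module and its arithmetic degree. Since $X$ is semistable, $\omegaxb$ is nef (Faltings) and $d^\circ=\deg(\omega_{X_K})=2g-2>1$, so Theorem D applies verbatim to $\omegaxb$.

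The first step is to convert $\hhat(\omegaxb)$ into the Faltings height $\chi_{\rm Fal}(\omega_X)$. Here $H^0(X,\omega_X)$ is a projective $O_K$-module of rank $g$, hence of $\ZZ$-rank $g[K:\QQ]\le d$, where $d=(2g-2)[K:\QQ]$. The inequality recorded after Theorem B, applied to $\lb=\omegaxb$, gives $\chi_{\sup}(\omegaxb)\le \hhat(\omegaxb)+d\log 2$. Comparing the supremum norm $\norm_{\sup}$ of the Arakelov metric with the natural $L^2$-norm $\norm_{\rm nat}$ on $H^0(X,\omega_X)_{\RR}$, the very definition of the invariants $\gamma_{X_\sigma}$ yields $\log\vol(B_{\rm nat})-\log\vol(B_{\sup})=\gamma_{X_\infty}$ (the choice of Haar measure cancels in the Euler characteristic), so that
\[
\chi_{\sup}(\omegaxb)=\chi_{\rm nat}(\omegaxb)-\gamma_{X_\infty},
\]
$\chi_{\rm nat}$ denoting the Euler characteristic for $\norm_{\rm nat}$. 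Finally the standard comparison between Euler characteristic and arithmetic degree reads $\chi_{\rm nat}(\omegaxb)=\chi_{\rm Fal}(\omega_X)-\tfrac g2\log|D_K|+c$ with $|c|\le c_0\,g[K:\QQ]$ for an explicit absolute constant $c_0$. Combining these and inserting the bound of Theorem D, $\hhat(\omegaxb)\le\frac{g+\varepsilon-1}{4(g-1)}\omegaxb^2+4d\log(3d)$, one obtains the single key inequality
\[
\chi_{\rm Fal}(\omega_X)\ \le\ \frac{g-1+\varepsilon}{4(g-1)}\,\omegaxb^2+\gamma_{X_\infty}+A,\qquad A:=4d\log(3d)+d\log 2+\tfrac g2\log|D_K|+c_0\,g[K:\QQ].
\]
A routine estimate, using $g[K:\QQ]\le d$ and the generous numerical constants built into $C(g,K)=2g\log|D_K|+18d\log d+25d$, shows $12A+4g[K:\QQ]\log(2\pi)\le 3C(g,K)$ and $\tfrac{4(g-1)}{g-1+\varepsilon}A+4g[K:\QQ]\log(2\pi)\le C(g,K)$.

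The last step is to feed the key inequality into the arithmetic Noether formula, which I write as $\omegaxb^2+\delta_X-4g[K:\QQ]\log(2\pi)=12\,\chi_{\rm Fal}(\omega_X)$. Multiplying the key inequality by $12$ and eliminating $\chi_{\rm Fal}(\omega_X)$ gives $\delta_X-12\gamma_{X_\infty}\le\bigl(\tfrac{3(g-1+\varepsilon)}{g-1}-1\bigr)\omegaxb^2+12A+4g[K:\QQ]\log(2\pi)$; since $\tfrac{3(g-1+\varepsilon)}{g-1}-1=2+\tfrac{3\varepsilon}{g-1}$ and $12A+4g[K:\QQ]\log(2\pi)\le 3C(g,K)$, this is exactly the first inequality of Corollary E. For the second, multiply the key inequality by $\tfrac{4(g-1)}{g-1+\varepsilon}$ instead, use the Noether formula in the form $\omegaxb^2=12\chi_{\rm Fal}(\omega_X)-\delta_X+4g[K:\QQ]\log(2\pi)$ to eliminate $\omegaxb^2$, and note $12-\tfrac{4(g-1)}{g-1+\varepsilon}=8+\tfrac{4\varepsilon}{g-1+\varepsilon}$; the second estimate on $A$ then delivers the second inequality. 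Thus the substance of the corollary lies entirely in Theorem D; the only delicate point in the derivation is keeping the archimedean term $\gamma_{X_\infty}$ with the correct coefficient $1$ (this is precisely where the factor $\tfrac12$ in the definition of $\gamma_M$ is used) and keeping the normalization constant $c$ explicit, so that all archimedean and discriminant contributions fit inside $C(g,K)$.
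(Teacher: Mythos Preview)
Your outline is exactly the paper's argument: Theorem D, then Minkowski to pass from $\hhat(\omegaxb)$ to $\chi_{\sup}(\omegaxb)$, then the identity $\chi_{\rm Fal}(\omega_X)-\chi_{\sup}(\omegaxb)=\gamma_{X_\infty}-\chi(O_K^g)$, and finally the Noether formula applied in the two directions you describe. The algebra eliminating $\chi_{\rm Fal}$ or $\omegaxb^2$ is correct, including the identities $\frac{3(g-1+\varepsilon)}{g-1}-1=2+\frac{3\varepsilon}{g-1}$ and $12-\frac{4(g-1)}{g-1+\varepsilon}=8+\frac{4\varepsilon}{g-1+\varepsilon}$.

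There is, however, one genuine slip in your error bookkeeping. You assert that the normalisation constant $c$ in $\chi_{\rm nat}(\omegaxb)=\chi_{\rm Fal}(\omega_X)-\tfrac g2\log|D_K|+c$ satisfies $|c|\le c_0\,g[K:\QQ]$ for an absolute $c_0$. In fact $c=r_1\log V(g)+r_2\log V(2g)$ with $V(n)=\pi^{n/2}/\Gamma(\tfrac n2+1)$, and by Stirling $-\log V(n)\sim\frac n2\log n$, so $-c$ is of order $\frac12 r\log r$ with $r=g[K:\QQ]$; no absolute $c_0$ exists. This is not a cosmetic issue: with your $A=4d\log(3d)+d\log2+\frac g2\log|D_K|+c_0\,g[K:\QQ]$, the term $12\cdot 4d\log(3d)=48d\log d+48d\log3$ only accounts for $48d\log d$, and the claimed inequality $12A+4g[K:\QQ]\log(2\pi)\le 3C(g,K)=6g\log|D_K|+54d\log d+75d$ cannot absorb the missing Stirling contribution. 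The paper handles this by writing the bound $\chi(O_K^g)>\frac12 r\log(2\pi)-\frac12 r\log r-\frac12 g\log|D_K|$ explicitly, so that the intermediate constant becomes $C'=\frac12 g\log|D_K|+\frac92 d\log d+4d\log 3$; then $12C'+4r\log(2\pi)\le 3C(g,K)$ and $4C'+4r\log(2\pi)\le C(g,K)$ are straightforward numerical checks. If you replace ``$c_0\,g[K:\QQ]$'' by ``$\frac12 r\log r$'' in your definition of $A$, your argument becomes literally the paper's.
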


The inequalities are equivalent up to error terms by Faltings's arithmetic
Noether formula.
We describe briefly how to deduce them from Theorem D.
It is standard to use Minkowski's theorem to transfer the upper bound for
$\hhat(\omegaxb)$
to an upper bound for $\chi_{\sup}(\omegaxb)$. It further gives an upper bound
of $\chi_{\rm Fal}(\omega_X)$ since the difference  $\chi_{\rm
Fal}(\omega_X)-\chi_{\sup}(\omegaxb)$ is essentially given by
$\gamma_{X_{\infty}}$.
Now the inequalities are obtained by the arithmetic Noether formula.

Many results of the above type are known in the literature. The new ingredient
of our result is that
the ``error terms'' $\gamma_{X_\infty}$ and $C(g,K)$ are explicitly defined.

Let us first compare the corollary with a result of Bost \cite{Bo}.
The second inequality is an effective version of \cite[Theorem IV]{Bo}.
In particular, if $X_K$ is non-hyperelliptic, then $\varepsilon=1$ and the
coefficient in the corollary agrees with that in \cite{Bo}. Note that our proofs
are completely different.
Bost obtained his result as a special case of his inequality between the slope
of a hermitian vector bundle and the height of a semi-stable cycle, while our
result is a consequence of the estimation of the corresponding linear series.

The first inequality is in the opposite direction of the conjectural arithmetic
Bogomolov--Miyaoka--Yau inequality
proposed by Parshin \cite{Pa}. Recall that the conjecture asserts
$$
\overline\omega_X^2 \leq A (\delta_X + (2g-2) \log |D_K|)+ \sum_{\sigma}
\xi_{X_\sigma}.
$$
Here $A$ is an absolute constant, and $\xi$ is a continuous real-valued function
on $M_g(\CC)$. Note that both $\delta$ and $\gamma$ are such functions.

It will be interesting to compare the first inequality with the main result of
S. Zhang \cite{Zh3}, which proves a formula expressing $\omega_a^2$ in terms of
the Beilinson--Bloch height $\langle\Delta_\xi, \Delta_\xi\rangle$ of the
Gross--Schoen cycle and some
canonical local invariants of $X_K$.
Note that the difference between $\omegaxb^2-\omega_a^2$ is well understood by
\cite{Zh2}.
Then our result gives a lower bound of $\langle\Delta_\xi, \Delta_\xi\rangle$ by
some local invariants.

It is also worth noting that if $X_K$ is hyperelliptic, then $\langle\Delta_\xi,
\Delta_\xi\rangle=0$.
Thus the comparison gives an inequality between two different sums of local
invariants of $X_K$.

\subsection{Classical Noether inequalities}

Let $X$ be a minimal surface of general type over a field, and $\omega_X$ be the
canonical
bundle. The classical Noether inequality
asserts that
$$
h^0(\omega_X) \le \frac{1}{2} \omega_X^2 + 2.
$$
Combining with the Noether formula
$$\chi(\omega_X)= \frac{1}{12}(\omega_X^2+ c_2(\Omega_X)),$$
it is easy to obtain
$$5\ \omega_X^2 \geq c_2(\Omega_X) -36.$$
Theorem D and Corollary E are arithmetic analogue of these results
with sharper coefficients.

In \cite{Sh}, Shin proves that, on an algebraic surface $X$
with non-negative Kodaira dimension,
$$
h^0(L) \le \frac{1}{2} L^2 + 2
$$
for any nef and big line bundle $L$ on $X$ such that the rational map $X
\dashrightarrow \mathbb P(H^0(X,L))$ is generically finite (cf. \cite[Theorem
2]{Sh}).

Theorem B is an arithmetic analogue of Shin's result,
but the proof in \cite{Sh} is not available here due to the essential
use of the adjunction formula.

\subsection{Idea of proof}

Our proofs of Theorem B and Theorem C are very similar.
Theorem C is sharper than Theorem B by the application of Clifford's theorem,
which gives a very good bound on linear series of special line bundles on
curves.

Now we describe the main idea to prove Theorem B.
Let $\lb$ be a nef line bundle.
Denote
$$\Delta(\lb)=\hhat(\lb)-\frac12 \lb^2.$$
We first find the largest constant $c\geq 0$ such that
$$
\lb(-c) = (\mathcal L, e^c\| \cdot \|)
$$
is still nef on $X$.
It is easy to control $\Delta(\lb)$ by $\Delta(\lb(-c))$.
Then the problem is reduced to $\lb(-c)$.

Denote by $\CE_1$ the line bundle associated to the codimension one part of the
base locus of
the strictly effective sections of $\lb(-c)$. We obtain a decomposition
$$
\lb(-c)= \lb_1+\eb_1.
$$
In Theorem \ref{degred1}, we construct hermitian metrics such that $\lb_1$ is
nef and $\eb_1$ is effective,
and such that strictly effective sections of $\lb_1$ can be transfered to those
on $\lb(-c)$.
Then it is easy to control $\Delta(\lb(-c))$ by $\Delta(\lb_1)$.
Then the problem is reduced to $\lb_1$.

The key property for $\lb(-c)$ is that, it usually has a large base locus, due
to the
lack of effective sections. In particular, $\deg(\CE_{1,K})>0$.
It gives a strict inequality $\deg(\CL_{1,K})<\deg(\CL_{K})$.

Keep the reduction process. We obtain $\lb_2, \lb_3, \cdots.$
The process terminates due to the strict decreasing of the degree.
We eventually end up with $\lb_n$ such that $\lb_n(-c_n)$ has no strictly
effective sections.
It leads to the proof of the theorem.

The successive minima of Gillet and Soul\'e is used to control the error terms
in the reduction process.

The structure of the paper is as follows.
In \S 2, we state some results bounding lattice points on normed modules. They
will be used in the proof of the main theorems.
In \S 3, we explore our major construction of the decomposition
$\lb(-c)=\lb_1+\eb_1$, and reduce Theorem A to Theorem B.
In \S 4, we prove Theorem B.
In \S 5, we prove Theorem C and Corollary E.

\

\noindent\textbf{Acknowledgments.}
The authors are very grateful for many important discussions with Atsushi
Moriwaki, Sheng-Li Tan
and Shou-Wu Zhang. In particular, Shou-Wu Zhang's question on applications of
Theorem B motivated
the authors to conceive Theorem C, and the discussions with Moriwaki provided
the bridge from Theorem B to Theorem A.

They would also like to thank the hospitality of Columbia University. The second
author is supported by the Ph.D. Program Scholarship Fund of ECNU, No. 2010025.

\section{Some results on normed modules}

By a normed $\mathbb Z$-module, we mean a pair $\mb=(M,\|\cdot\|)$ consisting of
a $\mathbb Z$-module $M$ and an $\mathbb R$-norm $\| \cdot \|$ on
$M_{\mathbb R}=M \otimes_{\mathbb Z} \mathbb R$.
We say that $\mb$ is a normed free $\mathbb Z$-module of finite rank, if
$M$ is a free $\mathbb Z$-module of finite rank.
It is the case which we will restrict to.

Let $\mb=(M,\|\cdot\|)$ be a normed free $\mathbb Z$-module of finite rank.
Define
$$
\Hhat(\mb)=\{m \in M: \| m \| \le 1\}, \quad
\Hhat_{\rm sef}(\mb)=\{m \in M: \| m \| <1\},
$$
and
$$
\hhat(\mb)= \log \# \Hhat(M), \quad \hhat_{\rm sef}(\mb)= \log \# \Hhat_{\rm
sef}(M).
$$
The Euler characteristic of $\mb$ is defined by
$$\chi(\mb)=\log
\frac{\mathrm{vol}(B(M))}{\mathrm{vol}(M_{\RR}/M)},$$
where
$B(M) = \{ x \in M_\mathbb R: \| x \| \le 1 \}$
is a convex body in $M_{\RR}$.

\subsection{Change of norms}

Let $\mb=(M,\|\cdot\|)$ be a normed free $\mathbb Z$-module of finite rank.
For any $\alpha \in \mathbb R$, define
$$\mb(\alpha)=(M, e^{-\alpha} \| \cdot \| ).$$
Since $\hhat_{\rm sef}(\mb)$ is finite, it is easy to have
$$
\hhat_{\rm sef}(\mb)= \lim_{\alpha\to 0^{\text{-}}}  \hhat(\mb(\alpha)).
$$
Then many results on $\hhat$ can be transfered to $\hsefhat$.
We first present a simple result on the change of effective sections.

\begin{prop} \label{norm1}

Let $\mb=(M,\|\cdot\|)$ be a normed free module of rank $r$. The following are
true:
\begin{itemize}
	\item[(1)] For any $\alpha\geq 0$, one has
$$
\begin{array}{rllll}
\hhat(\mb(-\alpha)) &\leq& \hhat(\mb) &\leq& \hhat(\mb(-\alpha))+
r\alpha+r\log3, \\
\hsefhat(\mb(-\alpha)) &\leq& \hsefhat(\mb) &\leq& \hsefhat(\mb(-\alpha))+
r\alpha+r\log3.
\end{array}
$$
	\item[(2)] One has
	$$ \hsefhat(\mb) \leq \hhat(\mb) \leq \hsefhat(\mb)+r\log3. $$
\end{itemize}
\end{prop}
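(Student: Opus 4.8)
The proof has essentially a single substantive ingredient: the right-hand inequality in part~(1) for $\hhat$, namely
\[
\hhat(\mb)\ \le\ \hhat(\mb(-\alpha))+r\alpha+r\log 3\qquad(\alpha\ge 0).
\]
All the other assertions — the left-hand inequalities, the $\hsefhat$-versions in~(1), and both halves of~(2) — will follow from this one by monotonicity and by the limiting relation $\hsefhat(\mb)=\lim_{\alpha\to 0^-}\hhat(\mb(\alpha))$ recorded just above the statement. So the plan is: prove the displayed inequality, then bootstrap.

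Write $B=\{x\in M_{\RR}:\|x\|\le 1\}$ for the unit ball of $\mb$; it is a compact symmetric convex body with $0$ in its interior, so every count below is finite, and $\Hhat(\mb(-\alpha))=M\cap tB$ with $t:=e^{-\alpha}\in(0,1]$. Taking logarithms, the displayed inequality is equivalent to
\[
\#(M\cap B)\ \le\ (3/t)^{r}\,\#(M\cap tB).
\]
I would prove this by covering $M\cap B$ by \emph{lattice} translates of $tB$. Choose $S\subseteq M\cap B$ maximal (for inclusion) subject to $\|m-m'\|\ge t$ for all distinct $m,m'\in S$; this exists and is finite since $M\cap B$ is. By maximality, each $m\in M\cap B$ lies at $\|\cdot\|$-distance $<t$ from some element of $S$ (or is itself in $S$), so $M\cap B\subseteq\bigcup_{m\in S}(m+tB)$. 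As each centre lies in $M$, we have $\#\big(M\cap(m+tB)\big)=\#(M\cap tB)$, hence $\#(M\cap B)\le \#S\cdot\#(M\cap tB)$.

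To bound $\#S$, note the bodies $\{m+\tfrac{t}{2}B\}_{m\in S}$ have pairwise disjoint interiors — a common interior point of $m+\tfrac t2 B$ and $m'+\tfrac t2 B$ would force $\|m-m'\|<t$ — and all lie in $B+\tfrac t2 B=(1+\tfrac t2)B$. Comparing Lebesgue volumes gives $\#S\,(t/2)^{r}\le(1+\tfrac t2)^{r}$, i.e.\ $\#S\le(1+2/t)^{r}$; and since $t\le 1$ we have $1+2/t\le 3/t$, so $\#S\le(3/t)^{r}$. This last step — and only here — uses the hypothesis $\alpha\ge 0$ and produces the exact constant $3$ rather than something slightly larger; it is the one genuinely delicate point, and I would expect the rest to be routine.

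Finally, the bootstrap. The left-hand inequalities in~(1) are immediate from $\Hhat(\mb(-\alpha))\subseteq\Hhat(\mb)$, and likewise with $\Hsefhat$. For~(2), $\hsefhat(\mb)\le\hhat(\mb)$ is trivial, while letting $\alpha\to 0^{+}$ in the inequality proved above, and using $\hhat(\mb(-\alpha))\to\hsefhat(\mb)$, gives $\hhat(\mb)\le\hsefhat(\mb)+r\log 3$. For the $\hsefhat$-version of~(1), apply the proved $\hhat$-inequality with $\mb(\beta)$ in place of $\mb$ (it holds for every normed free module of finite rank), obtaining $\hhat(\mb(\beta))\le\hhat(\mb(\beta-\alpha))+r\alpha+r\log 3$, and let $\beta\to 0^{-}$: by the limiting relation the two sides tend to $\hsefhat(\mb)$ and $\hsefhat(\mb(-\alpha))+r\alpha+r\log 3$ respectively.
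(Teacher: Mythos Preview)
Your proof is correct, and the overall architecture --- reduce everything to the single inequality $\hhat(\mb)\le\hhat(\mb(-\alpha))+r\alpha+r\log 3$, then bootstrap the remaining parts via the limit $\hsefhat(\mb)=\lim_{\alpha\to 0^+}\hhat(\mb(-\alpha))$ --- matches the paper exactly. The one place you diverge is in the mechanics of that core inequality. The paper uses an averaging (Blichfeldt-type) argument: it centers a copy of $\tfrac{t}{2}B$ at every point of $\Hhat(\mb)$, finds a point $y\in M_{\RR}$ covered by at least $N\ge \#\Hhat(\mb)/(1+2/t)^r$ of these bodies, and then observes that the pairwise differences of those $N$ centers land in $tB\cap M=\Hhat(\mb(-\alpha))$. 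You instead select a maximal $t$-separated subset $S\subset M\cap B$, cover $M\cap B$ by the lattice translates $\{m+tB:m\in S\}$, and bound $\#S$ by packing the half-balls $m+\tfrac{t}{2}B$ into $(1+\tfrac{t}{2})B$. Both routes rest on the identical volume comparison and deliver the identical constant $(1+2/t)^r\le(3/t)^r$; your version has the minor advantage that the covering centers already lie in $M$, so the count $\#(M\cap(m+tB))=\#(M\cap tB)$ is immediate rather than obtained through differences.
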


\begin{proof}

The first inequality of (1) implies the other two inequalities.
In fact, (2) is obtained by setting $\alpha\to 0$ in the first inequality of
(1).
It is also easy to deduce the second inequality of (1) by the first inequality
of (1).
In fact, replace $\mb$ by $\overline M(-\beta)$ with $\beta>0$ in the first
inequality.
Set $\beta\to 0$. The limit gives the second inequality.

Now we prove the first inequality.
For any $\beta>0$, denote
$$B(\beta)=\{x\in M_\RR:\ \|x\| \leq \beta\}.
$$
It is a symmetric convex body in $M_\RR$.
Then $B(1)$ and $B(e^{-\alpha})$ are exactly the unit balls of the metrics of
$\mb$ and $\mb(-\alpha)$.
Consider the set
$$S=\{x+ B(2^{-1}e^{-\alpha}):\ x\in \Hhat(\mb)\}.$$
All convex bodies in $S$ are contained in the convex body
$B(1+2^{-1}e^{-\alpha})$.
Comparing the volumes, we conclude that there is a point $y\in
B(1+2^{-1}e^{-\alpha})$
covered by at least $N$ convex bodies in $S$, where
\begin{eqnarray*}
N &\geq&
\frac{\#S\cdot \vol(B(2^{-1}e^{-\alpha}))}{\vol(B(1+2^{-1}e^{-\alpha}))} \\
&=&\#S\cdot \frac{(2^{-1}e^{-\alpha})^{r}}{(1+2^{-1}e^{-\alpha})^{r}}
=\#S\cdot \frac{1}{(1+2e^{\alpha})^r}.
\end{eqnarray*}
Then
$$
\log N \geq \log\#S -  r\log(1+2e^{\alpha})
\geq \hhat(\mb) -   r(\alpha+\log3).
$$

Let $x_1,\cdots, x_N$ be the centers of these $N$ convex bodies.
Then
$x_i-y\in B(2^{-1}e^{-\alpha})$, and thus
$x_i-x_j\in B(e^{-\alpha})$.
In particular, we have
$$\{x_i-x_1: i=1,\cdots, N\}\subset \Hhat(\mb(-\alpha)).$$
Therefore,
$$\hhat(\mb(-\alpha))\geq \log N
\geq \hhat(\mb) - r(\alpha+\log3).$$
It proves the result.
\end{proof}

\begin{remark}
There are many bounds for $\hhat(\mb)-\hhat(\mb(-\alpha))$ in the literature.
See \cite{GS1, Mo2} for example.
The bound we give here is more accurate. See also \cite[Lemma 2.9]{Yu2}.
\end{remark}

The following filtration version of the proposition will be used in the proof of
our main theorem.

\begin{prop} \label{secred}

Let $\overline M=(M, \|\cdot\|)$ be a normed free $\mathbb Z$-module of finite
rank.
Let $0=\alpha_0\leq \alpha_1\leq \cdots \leq \alpha_n$ be an increasing
sequence.
For $0\leq i \leq n$, denote by $r_i$ the rank of the submodule of $M$ generated
by $\widehat H^0(\overline M(-\alpha_i))$. Then
\begin{eqnarray*}
\widehat h^0(\overline M) & \leq& \widehat h^0(\overline M(-\alpha_n))+
\sum_{i=1}^{n} r_{i-1} (\alpha_{i}-\alpha_{i-1})  + 4r_0 \log r_0+2r_0 \log 3,\\
\widehat h^0(\overline M) &\geq&
\sum_{i=1}^{n} r_i(\alpha_i- \alpha_{i-1})
- 2r_0 \log r_0-r_0 \log 3 .
\end{eqnarray*}
The same results hold for the pair
$(\widehat h^0_{\rm sef}(\overline M),\widehat h^0_{\rm sef}(\overline
M(-\alpha_n))).$
\end{prop}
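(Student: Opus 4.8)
The plan is to prove this as a telescoping application of Proposition \ref{norm1}, using the monotonicity of the ranks $r_i$ along the filtration. First observe that since $0=\alpha_0\leq\alpha_1\leq\cdots\leq\alpha_n$, we have inclusions $\Hhat(\mb(-\alpha_{i}))\subseteq \Hhat(\mb(-\alpha_{i-1}))$, so the ranks satisfy $r_0\geq r_1\geq\cdots\geq r_n$. The idea is to compare $\hhat(\mb(-\alpha_{i-1}))$ with $\hhat(\mb(-\alpha_i))$ for each $i$, but the naive application of Proposition \ref{norm1}(1) would give error terms involving the full rank $r=r_0$ at every step, producing a bound with $r_0(\alpha_i-\alpha_{i-1})$ rather than the desired $r_{i-1}(\alpha_i-\alpha_{i-1})$ (for the upper bound) or $r_i(\alpha_i-\alpha_{i-1})$ (for the lower bound). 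So the key refinement is to replace $M$ at the $i$-th stage by the submodule $M_i$ generated by $\Hhat(\mb(-\alpha_i))$ — or more precisely by its saturation — which has rank $r_i$, and apply Proposition \ref{norm1} to the induced normed module $\overline{M}_i$ of rank $r_i$. Since all effective sections of $\mb(-\alpha_j)$ for $j\geq i$ already lie in $M_i$, passing to $\overline{M}_i$ does not change $\Hhat(\mb(-\alpha_j))$ for $j\geq i$; this is what lets the error term at stage $i$ involve only $r_i$ (or $r_{i-1}$), not $r_0$.

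Concretely, for the upper bound I would proceed from the top down. Working inside $\overline{M}_{i-1}$ (rank $r_{i-1}$), Proposition \ref{norm1}(1) with $\alpha=\alpha_i-\alpha_{i-1}$ gives
$$
\hhat(\mb(-\alpha_{i-1})) \leq \hhat(\mb(-\alpha_i)) + r_{i-1}(\alpha_i-\alpha_{i-1}) + r_{i-1}\log 3,
$$
where I have used that $\Hhat$ computed in $M$ agrees with that computed in $M_{i-1}$ for the twists $\alpha_{i-1}$ and $\alpha_i$. Summing over $i=1,\dots,n$ telescopes the $\hhat$ terms and yields
$$
\hhat(\mb) \leq \hhat(\mb(-\alpha_n)) + \sum_{i=1}^n r_{i-1}(\alpha_i-\alpha_{i-1}) + \log 3\sum_{i=1}^n r_{i-1}.
$$
The remaining task is to bound $\sum_{i=1}^n r_{i-1}\log 3$ by $4r_0\log r_0 + 2r_0\log 3$. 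This is where one must be careful: there can be up to $r_0$ distinct values among the $r_i$, but each \emph{value} of the rank can be repeated many times, so the sum $\sum r_{i-1}$ is not obviously bounded. The resolution is that consecutive equal ranks can be collapsed: if $r_{i-1}=r_i$, one can combine the two steps into a single application of Proposition \ref{norm1} with $\alpha = \alpha_{i+1}-\alpha_{i-1}$, at no extra cost in the $\log 3$ term. So without loss of generality the ranks are \emph{strictly} decreasing, $r_0>r_1>\cdots>r_n$, whence $n\leq r_0$ and the distinct ranks are at most $r_0, r_0-1,\dots$; then $\sum_{i=1}^n r_{i-1} \leq r_0 + (r_0-1) + \cdots \leq r_0^2/2$ — but this is still $\sim r_0^2$, too large. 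The sharper observation I expect to need is that each rank appears, after collapsing, at most once, and moreover whenever $r_i < r_{i-1}$ we can absorb the corresponding $\log 3$ contribution using the estimate $\log 3 \le $ (something like $\frac{1}{k}(4\log k + 2\log 3 - 4(k-1)\log\frac{k}{k-1}\cdots)$) — i.e., one argues that $\sum_{k=1}^{r_0}\log 3 \cdot[\text{rank }k\text{ occurs}]$ telescopes against a convexity bound $\sum_k k\log\frac{\cdot}{\cdot}$. More honestly: the cleanest route is to note $\sum_{i}r_{i-1}\log 3 \le r_0 \log 3 \cdot(\text{number of steps after collapsing}) \le r_0\log 3 \cdot r_0$ is wrong, so instead use that after collapsing $n\le r_0$ and bound $\sum_{i=1}^n r_{i-1}\log 3$ crudely...

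Actually the right accounting, which I would carry out, is: after collapsing equal ranks we have strictly decreasing $r_0>r_1>\cdots>r_n\ge 0$, so $n\le r_0$ and $r_{i-1}\le r_0-(i-1)$; but we also have $r_{i-1}\le r_0$, and the point of the $4r_0\log r_0$ term is precisely to dominate $\log 3\sum_{i=1}^{r_0}(r_0-i+1) = \log 3\cdot\frac{r_0(r_0+1)}{2}$ — no, that grows quadratically, so the stated bound must be using something stronger, namely that the number of \emph{strict} drops is at most $r_0$ but each drop of size $\delta$ only costs proportionally. I would therefore instead bound the $\log 3$ contribution at stage $i$ by $(r_{i-1}-r_i)\cdot(\text{large})$ rather than $r_{i-1}\cdot\log 3$, by applying Proposition \ref{norm1}(2) more carefully inside the quotient $\overline{M}_{i-1}/\overline{M}_i$ of rank $r_{i-1}-r_i$: splitting off the saturated sublattice gives $\hhat(\mb(-\alpha_{i-1})) \le \hhat(\mb(-\alpha_i)) + r_{i-1}(\alpha_i-\alpha_{i-1}) + (r_{i-1}-r_i)\log 3 + (\text{lattice-index term})$, and now $\sum_i(r_{i-1}-r_i)\log 3 = (r_0-r_n)\log 3 \le r_0\log 3$, while the lattice-index terms accumulate to give the $r_0\log r_0$ contribution via Minkowski / the successive-minima bounds referenced in the paper. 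The main obstacle, then, is exactly this bookkeeping of the lattice-index (covolume) error terms across the filtration — controlling how the successive minima of the sublattices $M_i$ interact so that their cumulative contribution is $O(r_0\log r_0)$ rather than growing with $n$ — and getting the explicit constants $4$ and $2$ right. The lower bound is entirely parallel: working inside $\overline{M}_i$ (rank $r_i$) one gets $\hhat(\mb(-\alpha_{i-1})) \ge \hhat(\mb(-\alpha_i)) + r_i(\alpha_i-\alpha_{i-1}) - r_i\log 3$ (after the analogous collapsing), sum and use $\hhat(\mb(-\alpha_n))\ge 0$ together with the same covolume accounting to absorb the $-2r_0\log r_0 - r_0\log 3$. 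Finally, since Proposition \ref{norm1} holds verbatim for $\hsefhat$ in place of $\hhat$, and the ranks $r_i$ are defined via $\Hhat$ (one checks the sef-version uses the same or larger generated submodules, which only helps), the identical argument gives the stated conclusion for the pair $(\hsefhat(\mb),\hsefhat(\mb(-\alpha_n)))$.
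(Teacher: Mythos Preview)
Your telescoping approach via Proposition~\ref{norm1} is exactly what the paper discusses in the remark immediately following the statement, and explicitly rejects: summing the step-by-step bound gives error $(r_0+\cdots+r_{n-1})\log 3$, and the paper says this ``may be bigger than that in Proposition~\ref{secred}, if the sequence $\{r_i\}_i$ has too many terms and decays too slowly.'' You correctly discover this yourself midway through --- even after collapsing to strictly decreasing ranks, $\sum_{i=1}^n r_{i-1}$ can be of order $r_0^2$, not $r_0\log r_0$ --- but your attempted fixes (working in quotients $\overline M_{i-1}/\overline M_i$, tracking ``lattice-index terms'') remain vague and do not close the gap. In particular there is no reason the covolume contributions across a length-$r_0$ filtration should sum to $O(r_0\log r_0)$; the obstacle you identify is real, and the telescoping strategy cannot reach the stated constants.

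The paper's proof avoids iteration entirely. It applies the Gillet--Soul\'e successive minima bound (Theorem~\ref{gs}) \emph{once} to $\overline M$ and once to $\overline M(-\alpha_n)$. The key observation linking the $\alpha_i$ to the successive minima $\mu_j$ of $\overline M$ is that, by definition of $r_i$ and $\mu_j$, one has $\alpha_{i-1}\le \mu_{r_{i-1}}$ and $\mu_{1+r_i}<\alpha_i$ (after collapsing to strict inequalities $r_0>r_1>\cdots>r_n$). So in the bound $\hhat(\overline M)\le\sum_{j=1}^{r_0}\max\{\mu_j,0\}+r_0\log 3+2r_0\log r_0$, one replaces each $\mu_j$ with $r_i+1\le j\le r_{i-1}$ by the larger quantity $\alpha_i$, yielding $\sum_{i=1}^n(r_{i-1}-r_i)\alpha_i$ plus the tail $\sum_{j\le r_n}\max\{\mu_j,0\}$; the tail is then controlled by the successive-minima lower bound for $\hhat(\overline M(-\alpha_n))$, costing a second $r_0\log 3+2r_0\log r_0$. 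Abel summation converts $\sum(r_{i-1}-r_i)\alpha_i+r_n\alpha_n$ into $\sum r_{i-1}(\alpha_i-\alpha_{i-1})$. The lower bound is symmetric, replacing $\mu_j$ by the smaller $\alpha_{i-1}$ and needing only one application of Theorem~\ref{gs}. This is the missing idea: compare the $\alpha_i$ directly to the global successive minima of $\overline M$, rather than iterating a local estimate.
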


\begin{remark}
We will only use the first inequality in this paper.
In a forthcoming paper, Yuan will use both inequalities to improve \cite[Theorem
A]{Yu2}.
\end{remark}

The proposition is a consequence of the successive minima of Gillet and Soul\'e.
One may try to use Proposition \ref{norm1} to prove the first inequality.
Namely, for each $i=1,\cdots,n$, one has
$$
h^0(\overline M(-\alpha_{i-1})) \leq \widehat h^0(\overline M(-\alpha_i))+
r_{i-1} (\alpha_{i}-\alpha_{i-1}) + r_{i-1} \log 3.
$$
Summing over $i$, we obtain
$$
\widehat h^0(\overline M) \leq \widehat h^0(\overline M(-\alpha_n))+
\sum_{i=1}^{n} r_{i-1} (\alpha_{i}-\alpha_{i-1}) + (r_0+\cdots+r_{n-1}) \log 3.
$$
The error term may be bigger than that in Proposition \ref{secred}, if the
sequence $\{r_i\}_i$ has
too many terms and decays too slowly. It would actually be the case in our
application.

\subsection{Successive minima}

Here we prove Proposition \ref{secred}. We first recall the successive minima of
Gillet and Soul\'e.

Let $\mb=(M,\|\cdot\|)$ be a normed free $\mathbb Z$-module of finite rank $r$.
For $i=1, \cdots, r$,  {\em the $i$-th logarithmic minimum of $\mb$} is defined
to be
$$
\mu_i(\mb) = \sup \{ \mu \in \mathbb R : {\rm rank} \langle\Hhat(\overline M( -
\mu))\rangle_{\ZZ}
\ge i \}.
$$
Here $\langle\Hhat(\overline M( - \mu))\rangle_{\ZZ}$ denotes the
$\ZZ$-submodule of $M$ generated by
$\Hhat(\overline M(-\mu))$.

The following classical result gives a way to estimate $\hhat(\mb)$ and
$\chi(\mb)$ in terms of the minima of $\mb$.

\begin{theorem}[successive minima]\label{gs}
Let $\mb=(M,\|\cdot\|)$ be a normed free $\mathbb Z$-module of finite rank $r$.
Then
$$
r \log 2-\log(r!)\leq \chi(\mb)- \sum_{i=1}^r \mu_i(\mb) \le r \log 2,
$$
and
$$
\left|\hhat(\mb)- \sum_{i=1}^r \max\{\mu_i(\mb), 0\}\right| \le r \log 3 + 2
r \log r.
$$
The second result still holds if replacing $\hhat(\mb)$ by $\hhat_{\rm
sef}(\mb)$.
\end{theorem}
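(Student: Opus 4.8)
The plan is to convert both inequalities into classical statements of the geometry of numbers. First I would set up the dictionary. Let $B=\{x\in M_\RR:\|x\|\le 1\}$, a symmetric convex body, and write $\mathrm{covol}(M)=\vol(M_\RR/M)$ for a fixed Haar measure. The unit ball of $\mb(-\mu)=(M,e^{\mu}\|\cdot\|)$ is $e^{-\mu}B$, so $\Hhat(\mb(-\mu))=M\cap e^{-\mu}B$, and $\mathrm{rank}\langle\Hhat(\mb(-\mu))\rangle\ge i$ precisely when $e^{-\mu}B$ contains $i$ linearly independent lattice points, i.e.\ when $e^{-\mu}\ge\lambda_i$, where $\lambda_1\le\cdots\le\lambda_r$ are the ordinary successive minima of $B$ relative to $M$. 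Hence $\mu_i(\mb)=-\log\lambda_i$ for each $i$, while $\hhat(\mb)=\log\#(M\cap B)$, $\hhat_{\rm sef}(\mb)=\log\#(M\cap B^{\circ})$ for the open ball $B^{\circ}$, and $\chi(\mb)=\log(\vol(B)/\mathrm{covol}(M))$.

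With this dictionary the first inequality is exactly Minkowski's second theorem on successive minima, $\tfrac{2^{r}}{r!}\,\mathrm{covol}(M)\le\lambda_1\cdots\lambda_r\,\vol(B)\le 2^{r}\,\mathrm{covol}(M)$: taking logarithms, substituting $\sum_i\log\lambda_i=-\sum_i\mu_i(\mb)$ and forming $\chi(\mb)=\log\vol(B)-\log\mathrm{covol}(M)$, one obtains $r\log 2-\log(r!)\le\chi(\mb)-\sum_i\mu_i(\mb)\le r\log 2$, as asserted.

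For the second inequality I would treat the two directions separately, fixing linearly independent $v_1,\dots,v_r\in M$ with $\|v_i\|\le\lambda_i$ (the minima of a closed body are attained, and the $v_i$ may be chosen greedily). For the upper bound, invoke the classical lattice-point estimate $\#(M\cap B)\le\prod_{i=1}^{r}(2i/\lambda_i+1)$, a standard consequence of Minkowski's theory of reduced bases; bounding the $i$-th factor by $(2i+1)/\lambda_i$ when $\lambda_i\le 1$ and by $2i+1$ otherwise, and using $\max\{\mu_i(\mb),0\}=\log\max\{\lambda_i^{-1},1\}$, yields $\hhat(\mb)\le\sum_i\max\{\mu_i(\mb),0\}+\sum_{i=1}^{r}\log(2i+1)\le\sum_i\max\{\mu_i(\mb),0\}+r\log 3+r\log r$. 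For the lower bound, put $s=\#\{i:\lambda_i<1\}$ and count the pairwise distinct lattice vectors $\sum_i c_iv_i$ with $c_i\in\ZZ$, $|c_i|\le\lfloor 1/((s+1)\lambda_i)\rfloor$: each has norm $\le\sum_i|c_i|\lambda_i\le s/(s+1)<1$, so $\#(M\cap B^{\circ})\ge\prod_{i:\,\lambda_i<1/(s+1)}\tfrac{1}{(s+1)\lambda_i}$; since an index with $1/(s+1)\le\lambda_i<1$ contributes less than $\log(s+1)$ to $\sum_i\max\{\mu_i(\mb),0\}$ and there are at most $s\le r$ of them, a short bookkeeping gives $\hhat_{\rm sef}(\mb)\ge\sum_i\max\{\mu_i(\mb),0\}-2s\log(s+1)\ge\sum_i\max\{\mu_i(\mb),0\}-2r\log r-r\log 3$ (the case $r=1$ being checked directly). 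Finally $B^{\circ}\subseteq B$ gives $\hhat_{\rm sef}(\mb)\le\hhat(\mb)$, so the upper bound descends to $\hhat_{\rm sef}$ and the lower bound lifts to $\hhat$, which establishes both the stated estimate and its $\hhat_{\rm sef}$ version.

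The only input that is not a direct translation or an elementary packing/counting argument is the classical bound $\#(M\cap B)\le\prod_i(2i/\lambda_i+1)$ — equivalently, the production of a basis of $M$ whose norms are controlled by the $\lambda_i$ up to factors depending only on the index (this is essentially the content of \cite{GS1}). I expect establishing or precisely citing this to be the main point, and it is also the source of the $r\log r$ in the error term on the upper side, the comparable term on the lower side coming from the bookkeeping over minima close to $1$. All constants here are slightly better than what Theorem~\ref{gs} claims, so any looseness in the write-up is harmless.
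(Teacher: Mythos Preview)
Your outline is sound and your dictionary between the logarithmic minima $\mu_i$ and the classical $\lambda_i$ is set up correctly; the first inequality is indeed nothing but Minkowski's second theorem, exactly as the paper says. Your second inequality is also correct in spirit, and your passage between $\hhat$ and $\hhat_{\rm sef}$ via the trivial inclusion $B^\circ\subset B$ (together with the observation that your lower-bound construction already lands in $B^\circ$) is cleaner than the paper's limit trick $\hhat(\mb(-\alpha))\to\hhat_{\rm sef}(\mb)$ as $\alpha\to 0^+$.

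Where you and the paper genuinely diverge is in the route to the lattice-point bounds. The paper does not use the reduced-basis estimate $\#(M\cap B)\le\prod_i(2i/\lambda_i+1)$ at all; instead it restricts to the sublattice $M_0$ generated by $\Hhat(\mb)$, notes that the polar body $B_0^*$ then contains no nonzero point of $M_0$, and invokes the two-sided van~der~Corput/Mahler-type inequalities
\[
6^{-r_0}\ \le\ \frac{\#(M_0\cap B_0)}{\vol(B_0)}\ \le\ \frac{6^{r_0}(r_0!)^2}{4^{r_0}}
\]
(collected in \cite{Mo2}), combining them with Minkowski~II for $B_0$ to reach $\sum_i\mu_i$. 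So the paper factors through $\vol(B_0)$ as an intermediary, whereas you go from $\#(M\cap B)$ to the $\lambda_i$ in one step and handle the lower bound by an explicit construction rather than a volume inequality. Your approach is more hands-on and yields slightly sharper constants on the upper side; the paper's has the advantage that both directions come from a single cited black box, so nothing needs to be reproved. Either way the substantive input is the same circle of ideas around \cite{GS1}, and you are right to flag the inequality $\#(M\cap B)\le\prod_i(2i/\lambda_i+1)$ as the one step that requires a precise reference or a short proof via a Mahler-type basis---it is not quite immediate from the definition of the $\lambda_i$.
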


\begin{proof}

The first result is a restatement of Minkowski's second theorem on successive
minima.

The second result for $\hhat(\mb)$ is essentially due to Gillet--Soul\'e
\cite{GS1}, where the error term is not explicit. It implies the same result for
$\hhat_{\rm sef}(\mb)$. In fact, apply it to $\mb(-\alpha)$ for $\alpha>0$, we
have
$$
\left|\hhat(\mb(-\alpha))- \sum_{i=1}^r \max\{\mu_i(\mb)-\alpha, 0\}\right| \le
r \log 3 + 2
r \log r.
$$
Set $\alpha\to 0$. Note that $\hhat(\mb(-\alpha))$ converges to $\hhat_{\rm
sef}(\mb)$.
It gives the bound for $\hhat_{\rm sef}(\mb)$.

Now we check the explicit error terms.
We will use some effective error terms collected by Moriwaki \cite{Mo2}.
We will use similar notations.

Without loss of generality, assume $M = \mathbb Z^r$.
Define by $M_0$ the submodule of $M$ generated by generated by
$\Hhat(\mb)$, and denote $r_0= \mathrm{rank}(M_0)$.
Denote
$$
B = \{ x \in M_\mathbb R: \| x \| \le 1 \},
$$
which is a convex centrally symmetric bounded absorbing set in $\mathbb R^r$.
Let $B_0=B \cap (M_0 \otimes_\mathbb Z \mathbb R)$ and
let $B_0^*$ be the polar body of $B_0$. That is,
$$
B_0^*=\{x \in M_0 \otimes_\mathbb Z \mathbb R : |\langle x, y \rangle| \le
1  \, {\rm for \, all} \, y \in B_0 \}.
$$
Since $M_0$ is generated by $M_0\cap B_0$, we have $\# (M_0 \cap B_0^*)=1$.

As in \cite{Mo2}, we
have
$$
6^{-r_0} \le \frac{\# \Hhat(\mb)}{\vol(B_0)} \le
\frac{6^{r_0}(r_0!)^2}{4^{r_0}}.
$$
Apply Minkowski's second theorem on successive minima to $B_0$, we obtain
$$
\frac{2^{r_0}}{r_0!} \prod_{i=1}^{r_0} \frac{1}{\lambda_i(B_0)} \le \vol(B_0)
\le
2^{r_0} \prod_{i=1}^{r_0} \frac{1}{\lambda_i(B_0)},
$$
where $\lambda_i(B_0)$ is the $i$-th successive minimum of $B_0$.
Note that we used a different normalization of the minima here, but the relation
is simply $\log \lambda_i(B_0)= \mu_i (\mb)$ for $i = 1, \cdots r_0$.
Thus we can get
$$
\frac{1}{3^{r_0}r_0!}\prod_{i=1}^{r_0} \frac{1}{\lambda_i(B_0)} \le \Hhat(\mb)
\le 3^{r_0} (r_0!)^2 \prod_{i=1}^{r_0} \frac{1}{\lambda_i(B_0)}
$$
Therefore we finally get
\begin{eqnarray*}
\left| \sum_{i=1}^n \max\{\mu_i(\mb), 0\}-\hhat(\mb) \right| & \le & r_0
\log 3 + 2 r_0 \log r_0.
\end{eqnarray*}
It proves the second result.

\end{proof}

\

\begin{proof}[Proof of Proposition \ref{secred}]
By the same limit trick as above, the results for $h^0$ implies that for
$h^0_{\rm sef}$.

We first prove the first inequality.
By definition,
$$
r_0 \geq r_1 \geq \cdots \geq r_n.
$$
If $r_{i-1}=r_{i}$ for some $i$, the inequality does not depend on $\mb_i$.
We can remove $\mb_i$ from the data.
Thus we can assume that
$$
r_0 > r_1 > \cdots > r_n.
$$

For $j=1,\cdots, r_0$, denote by $\mu_j$ the $j$-th logarithmic successive
minimum of $\mb$.
By the definition, it is easy to have
$$
\alpha_{i-1}\leq \mu_{r_{i-1}} \leq \mu_{1+r_i} < \alpha_{i}, \quad i=1,\cdots,
n.
$$
Then we can bound the sequence $\{\mu_j\}_j$ by the sequence $\{\alpha\}_i$.

By Theorem \ref{gs},
$$
\widehat h^0(\overline M) \le \sum_{j=1}^{r_0} \max\{\mu_j,0\} + r_0 \log 3
+ 2r_0 \log r_0.
$$
Replace $\mu_j$ by $\alpha_{i}$ for any $r_i+1 \leq j\leq r_{i-1}$
in the bound.
It gives
$$
\widehat h^0(\overline M) \leq
 \sum_{i=1}^{n} (r_{i-1}-r_i) \alpha_i
 + \sum_{j=1}^{r_n} \max\{\mu_j,0\} +r_0 \log 3 + 2r_0 \log r_0.
$$

Applying Theorem \ref{gs} to $\overline M(-\alpha_n)$, we obtain
\begin{eqnarray*}
\widehat h^0(\overline M(-\alpha_n))
& \ge & \sum_{j=1}^{r_n} \max\{\mu_j-\alpha_n,0\} - r_n \log 3 - 2r_n \log
r_n  \\
& \ge & \sum_{j=1}^{r_n} \max\{\mu_j,0\} - r_n \alpha_n - r_0 \log 3 - 2r_0 \log
r_0.
\end{eqnarray*}
It follows that
\begin{eqnarray*}
\widehat h^0(\overline M)
&\leq&  \sum_{i=1}^{n} (r_{i-1}-r_i) \alpha_i+\widehat h^0(\overline
M(-\alpha_n))
+ r_n\alpha_n \\
& & + 2r_0 \log 3 + 4r_0 \log r_0\\
&=& \widehat h^0(\overline M(-\alpha_n))
+\sum_{i=1}^{n} r_{i-1} (\alpha_{i}-\alpha_{i-1}) \\
& & + 2r_0 \log 3 + 4r_0 \log r_0.
\end{eqnarray*}
It proves the first inequality.

Now we prove the second inequality.
Still apply Theorem \ref{gs}.
We have
$$
\widehat h^0(\overline M) \geq \sum_{j=1}^{r_0} \max\{\mu_j,0\} -r_0 \log 3
- 2r_0 \log r_0.
$$
It follows that
$$
\widehat h^0(\overline M) \geq \sum_{j=1}^{r_1} \max\{\mu_j,0\} -r_0 \log 3
- 2r_0 \log r_0.
$$
Replace $\mu_j$ by $\alpha_{i-1}$ for any $r_i+1 \leq j\leq r_{i-1}$.
It gives
\begin{eqnarray*}
\widehat h^0(\overline M)
& \geq & r_n\alpha_n+ \sum_{i=2}^{n} (r_{i-1}-r_i) \alpha_{i-1}
-r_0 \log 3 - 2r_0 \log r_0\\
&= &  \sum_{i=1}^{n} r_i(\alpha_i- \alpha_{i-1})
-r_0 \log 3 - 2r_0 \log r_0.
\end{eqnarray*}
It finishes the proof.

\end{proof}


\section{The key decompositions}

The key idea of the proof the main theorems is to reduce the sections of $\lb$
to sections of nef line bundles of smaller degree. The goal here is to introduce
this process.

\subsection{Notations and preliminary results}

Let $X$ be an arithmetic surface, and $\lb=(\CL, \| \cdot \|)$ be a hermitian
line bundle over $X$.
We introduce the following notations.

\subsubsection*{Effective sections}
Recall that the set of \emph{effective sections} is
$$
\Hhat(X,\lb)= \{ s \in H^0(X, \mathcal L): \| s \|_{\sup} \le 1\}.
$$
Define the set of \emph{strictly effective sections} to be
$$
\Hhat_{\rm sef}(X,\lb)= \{ s \in H^0(X, \mathcal L): \| s \|_{\sup} < 1\}.
$$
Denote
$$
\hhat(X,\lb)= \log\# \Hhat(X,\lb), \quad \hhat_{\rm sef}(X,\lb)= \log\#
\Hhat_{\rm sef}(X,\lb).
$$
We say that $\lb$ is \emph{effective} (resp. \emph{strictly effective}) if
$\hhat(X,\lb)\neq 0$ (resp. $\hhat_{\rm sef}(X,\lb)\neq0$).

We usually omit $X$ in the above notations. For example, $\Hhat(X,\lb)$ is
written as $\Hhat(\lb)$.

Note that $\overline M=(H^0(X, \mathcal L), \| \cdot \|_{\sup})$ is a normed
$\ZZ$-module.
The definitions are compatible in that
$$
\Hhat(\lb),\quad \Hsefhat(\lb), \quad \hhat(\lb), \quad \hsefhat(\lb)
$$
are identical to
$$
\Hhat(\mb),\quad \Hsefhat(\mb), \quad \hhat(\mb), \quad \hsefhat(\mb).
$$
Hence, the results in last section can be applied here.

For example, Proposition \ref{norm1} gives
$$
\hhat_{\rm sef}(\lb)\leq \hhat(\lb) \leq \hhat_{\rm sef}(\lb)
+ h^0(\CL_\QQ)\log 3.
$$

Note that if $X$ is also defined over $\Spec(O_K)$ for some number field $K$.
Then we obtain two projective curves $X_\QQ=X\times_{\ZZ}\QQ$ and
$X_K=X\times_{O_K}K$,
and two line bundles $\CL_\QQ$ and $\CL_K$.
It is easy to have $h^0(\CL_\QQ)=[K:\QQ]h^0(\CL_K)$.

\subsubsection*{Change of metrics}

For any continuous function $f: X(\mathbb C) \rightarrow \mathbb R$, denote
$$
\lb(f)=(\mathcal{L}, e^{-f}\| \cdot \|).
$$
In particular,
$\ob(f)=(\mathcal{O}, e^{-f})$ is the trivial line bundle with the metric
sending the section 1 to $e^{-f}$.
The case $\ob_X=\ob(0)$ is exactly the trivial hermitian line bundle on $X$.

If $c>0$ is a constant, one has
$$
\hhat(\lb(-c))\leq \hhat(\lb) \leq \hhat(\lb(-c))+h^0(\CL_\QQ)(c+\log 3)
$$
$$
\hhat_{\rm sef}(\lb(-c))\leq \hhat(\lb) \leq \hhat_{\rm
sef}(\lb(-c))+h^0(\CL_\QQ)(c+\log 3).
$$
These also follow from Proposition \ref{norm1}.

\subsubsection*{Base loci}

Let $H$ denote $\Hhat(\lb)$ or $\Hhat_{\rm sef}(\lb)$ in the following.
Consider the natural map
$$
H \times \CL^\vee \longrightarrow \CL \times \CL^\vee \longrightarrow
\mathcal{O}_X.
$$
The image of the composition generates an ideal sheaf of $O_X$.
The zero locus of this ideal sheaf, defined as a closed subscheme of $X$, is
called \emph{the base locus of} $H$
in $X$. The union of the irreducible components of codimension one of the base
locus is called \emph{the fixed part of} $H$ in $X$.

\subsubsection*{Absolute minima}

For any irreducible horizontal divisor $D$ of $X$, define the normalized height
function
$$
h_{\lb}(D)=\frac{\widehat\deg(\lb|_ D)}{\deg D_{\mathbb Q}}.
$$
Define the \emph{absolute minimum} $e_{\lb}$ of $\lb$ to be
$$
e_{\lb}=\inf_{D} h_{\lb}(D).
$$
It is easy to verify that
$$
e_{\lb(\alpha)}=e_{\lb}+\alpha, \quad \alpha\in\RR.
$$

By definition, the absolute minimum $e_{\lb}\geq 0$ if $\lb$ is nef.
Then $\lb(-e_{\lb})$ is a nef line bundle whose absolute minimum is zero.
It is a very important fact in our treatment in the following.

We refer to \cite{Zh1} for more results on the minima of $\lb$ for nef
hermitian line bundles.

\subsection{The key decompositions}

The goal of this section is to prove two basic decompositions of hermitian line
bundles.
They are respectively decompositions keeping $\Hhat(\lb)$
and $\Hhat_{\rm sef}(\lb)$. The proofs are the same, but we state them in
separate theorems
since they will be used for different purposes.

\begin{theorem}\label{degred0}

Let $X$ be a regular arithmetic surface,
and $\lb$ be a nef hermitian line bundle with $\hhat(\lb)\neq 0$.
Then there is a decomposition
$$
\lb=\eb+\lb_1
$$
where $\eb$ is an effective hermitian line bundle on $X$, and $\lb_1$ is a nef
hermitian line bundle on
$X$ satisfying the following conditions:
\begin{itemize}
\item There is an effective section $e\in \Hhat(\eb)$ such that
$\mathrm{div}(e)$ is the fixed part of $\Hhat(\lb)$ in $X$.
\item The map $\CL_1\to \CL$ defined by tensoring with $e$ induces a bijection
$$\Hhat(\lb_1)  \stackrel{\otimes e}{\longrightarrow}\Hhat(\lb).$$
Furthermore, the bijection keeps the supremum norms, i.e.,
$$
\|s\|_{\sup} = \|e\otimes s\|_{\sup}, \quad \forall \ s\in \Hhat(\lb_1).
$$
\end{itemize}

\end{theorem}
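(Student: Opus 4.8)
The plan is to read the decomposition off the fixed part of $\Hhat(\lb)$ on the scheme level, and then to manufacture the metrics so that effective sections are transported with no loss. First I would record the underlying decomposition. Since $X$ is regular, the fixed part of $\Hhat(\lb)$ in $X$ is an effective Cartier divisor $F$; set $\CE=\CO_X(F)$, let $e\in H^0(X,\CE)$ be the canonical section with $\mathrm{div}(e)=F$, and put $\CL_1=\CL\otimes\CE^{-1}$, so that $\CL=\CE\otimes\CL_1$. By definition of the fixed part, every $s\in\Hhat(\lb)$ satisfies $\mathrm{div}(s)\ge F$, hence factors uniquely as $s=e\otimes s_1$ with $s_1\in H^0(X,\CL_1)$; moreover the finite family $\{s_1:s\in\Hhat(\lb)\}$ has empty fixed part in $X$, and since $\Hhat(\lb)\neq\{0\}$ its base locus on each complex fibre $X_\sigma$ is the restriction $F_\sigma$ of $F$, so the $s_1$ have no common zero on $X(\CC)$.

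The heart of the argument is the choice of metric. Nothing is chosen at the finite places. At each $\sigma:K\hookrightarrow\CC$ I would put on $\CE$ the metric determined by
\[
\|e\|_{\CE,\sigma}(x)=\max_{0\neq s\in\Hhat(\lb)}\frac{\|s\|_{\lb,\sigma}(x)}{\|s\|_{\sup}},
\]
and give $\CL_1$ the unique metric $\|\cdot\|_1$ with $\|\cdot\|_\lb=\|\cdot\|_{\CE}\otimes\|\cdot\|_1$; write $\eb=(\CE,\|\cdot\|_{\CE})$ and $\lb_1=(\CL_1,\|\cdot\|_1)$. Because $\Hhat(\lb)$ is finite and the $s_1$ have no common zero, this function is continuous, positive away from $F$, and vanishes along $F$ to exactly the order prescribed by $\CO_X(F)$, so $\|\cdot\|_{\CE,\sigma}$ is a genuine continuous metric; it is conjugation-invariant since $\|\cdot\|_\lb$ is and the $s$ are defined over $O_K$. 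Writing sections in a local frame, the weight of $\|\cdot\|_1$ equals $\log\max_s|g_s|^2$, where $g_s$ is the holomorphic local coefficient of $s_1$ divided by the constant $\|s\|_{\sup}$; as a maximum of finitely many plurisubharmonic functions this is plurisubharmonic, so $\lb_1$ has semipositive curvature, and clearly $\|e\|_{\CE,\sup}=1$.

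It then remains to verify the stated properties. One has $e\in\Hhat(\eb)$, so $\eb$ is effective, and $\mathrm{div}(e)=F$ is the fixed part of $\Hhat(\lb)$. For nefness of $\lb_1$ the curvature is semipositive by the previous paragraph, and for an integral divisor $Y\subset X$ one picks, using that $\{s_1\}$ has empty fixed part, some $s\in\Hhat(\lb)$ with $Y\not\subset\mathrm{supp}\,\mathrm{div}(s_1)$; identifying $\lb_1$ with $\widehat c_1(\CO_X(\mathrm{div}\,s_1),\|\cdot\|_1)$ one computes $\widehat\deg(\lb_1|_Y)$ from the effective divisor $\mathrm{div}\,s_1$ and the metric $\|\cdot\|_1$, and it is $\ge0$ because $\mathrm{div}\,s_1$ is effective and $\|s_1\|_{1,\sup}\le1$ (shown next). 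For the bijection, from $\|e\|_{\CE}\le1$ one gets $\|e\otimes t_1\|_{\lb,\sup}\le\|t_1\|_{1,\sup}$ for every $t_1$, so $\otimes e$ maps $\Hhat(\lb_1)$ injectively into $\Hhat(\lb)$; conversely, for $s=e\otimes s_1\in\Hhat(\lb)$ the identity $\|s_1\|_1(x)=\|s\|_{\lb,\sigma}(x)/\|e\|_{\CE,\sigma}(x)$ gives $\|s_1\|_{1,\sup}\le\|s\|_{\sup}$ (bound the maximum defining $\|e\|_{\CE,\sigma}$ below by its $s$-term), while at a point where $\|s\|_\lb$ attains its supremum that maximum equals $1$, so $\|s_1\|_{1,\sup}=\|s\|_{\sup}\le1$; hence $s_1\in\Hhat(\lb_1)$, $\otimes e$ is onto, and it preserves supremum norms.

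The step I expect to be the main obstacle is the construction of the metric: one needs a metric on $\CE$ that is at most $1$, keeps $\lb_1$ semipositive, and is rigid enough that $\Hhat(\lb_1)$ is exactly $\{s/e:s\in\Hhat(\lb)\}$ with matching supremum norms. A metric too close to the trivial one pointwise would destroy semipositivity of $\lb_1$ near $F$, whereas an arbitrary semipositive choice would only yield $\|s_1\|_{1,\sup}\le\|s\|_{\sup}$ rather than equality; taking $\|e\|_{\CE}$ to be the maximum of the normalized norms $\|s\|_\lb/\|s\|_{\sup}$ is precisely what reconciles the two requirements.
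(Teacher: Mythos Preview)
Your argument is correct and follows the same construction as the paper: the underlying decomposition $\CL=\CE+\CL_1$ via the fixed part, and the metric $\|e\|_{\CE}(x)=\max_{s}\|s(x)\|/\|s\|_{\sup}$ on $\CE$, are exactly what the paper uses, as are the verifications of the bijection, the norm-preservation, and the nefness on integral one-cycles. The one point of departure is how semipositivity of $\lb_1$ is obtained. The paper does not check this for the quotient metric directly; instead it invokes a separate regularization result (the plurisubharmonic envelope of a continuous metric, Theorem~\ref{metric}, based on work of Berman) to replace $\|\cdot\|_{\CL_1}$ by its semipositive envelope, and then re-checks that the decomposition and the supremum norms are unaffected. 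Your computation that the local weight of the quotient metric is $\max_s\log|g_s|^2$, hence plurisubharmonic as a finite maximum of logarithms of moduli of holomorphic functions with no common zero, shows that this regularization step is in fact superfluous here: the metric constructed is already semipositive. This makes your route more elementary and self-contained, eliminating the dependence on the envelope theorem; the paper's approach via Theorem~\ref{metric}, on the other hand, is more flexible in that it would still produce a nef $\lb_1$ starting from any continuous metric on $\CE$ with $\|e\|_{\CE}\le 1$ and the correct supremum norms, not just the specific maximum above.
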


\begin{theorem}\label{degred1}

Let $X$ be a regular arithmetic surface,
and $\lb$ be a nef hermitian line bundle with $\hhat_{\rm sef}(\lb)\neq 0$.
Then there is a decomposition
$$
\lb=\eb+\lb_1
$$
where $\eb$ is an effective hermitian line bundle on $X$, and $\lb_1$ is a nef
hermitian line bundle on
$X$ satisfying the following conditions:
\begin{itemize}
\item There is an effective section $e\in \Hhat(\eb)$ such that
$\mathrm{div}(e)$ is the fixed part of $\Hhat_{\rm sef}(\lb)$ in $X$.
\item The map $\CL_1\to \CL$ defined by tensoring with $e$ induces a bijection
$$\Hhat_{\rm sef}(\lb_1)  \stackrel{\otimes e}{\longrightarrow}\Hhat_{\rm
sef}(\lb).$$
Furthermore, the bijection keeps the supremum norms, i.e.,
$$
\|s\|_{\sup} = \|e\otimes s\|_{\sup}, \quad \forall \ s\in \Hhat_{\rm
sef}(\lb_1).
$$
\end{itemize}

\end{theorem}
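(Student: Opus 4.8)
The plan is to carry out the decomposition in two stages: algebraically, where $\CL_1$ is $\CL(-D)$ for $D$ the fixed part and $\eb$ carries $\CO_X(D)$; and analytically, where the metric on $\CL_1$ is produced as a semipositive envelope, after which effectivity of $\eb$, nefness of $\lb_1$, and the norm-preserving bijection follow formally. For the algebra: let $D$ be the fixed part of $\Hhat_{\rm sef}(\lb)$ in $X$, the effective cycle whose multiplicity along an integral codimension-one subscheme $Y$ is $\min\{\mathrm{ord}_Y(s):0\ne s\in\Hhat_{\rm sef}(\lb)\}$ (this includes vertical components, and $\Hhat_{\rm sef}(\lb)$ has a nonzero element by hypothesis). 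Since $X$ is regular, $D$ is Cartier; put $\CE_1=\CO_X(D)$ with its canonical section $e$ ($\mathrm{div}(e)=D$) and $\CL_1=\CL\otimes\CE_1^{-1}$. Tensoring with $e$ identifies $H^0(X,\CL_1)$ with the subspace of $H^0(X,\CL)$ of sections vanishing along $D$, which by the definition of $D$ contains $\Hhat_{\rm sef}(\lb)$; so, before any metric is chosen, $\otimes e$ is a bijection from $\{s_1:e\otimes s_1\in\Hhat_{\rm sef}(\lb)\}$ onto $\Hhat_{\rm sef}(\lb)$.

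For the metric: the semipositive metric of $\lb$ and the section $e$ give a degenerate continuous metric $t\mapsto\|e\otimes t\|$ on $\CL_1$ over $X(\CC)$, with local weight $\phi_\sigma-\log|f_\sigma|$ on $X_\sigma$, where $\phi_\sigma$ is the plurisubharmonic weight of $\lb$ and $f_\sigma$ a local equation of the horizontal part of $D$. I define $\|\cdot\|_{\lb_1}$ to be its semipositive envelope: the metric whose weight on $X_\sigma$ is the upper-semicontinuous regularization of $\sup\{\psi\ \text{plurisubharmonic}:\psi\le\phi_\sigma-\log|f_\sigma|\}$, symmetrized under complex conjugation. It is semipositive by construction and satisfies $\|t\|_{\lb_1}\ge\|e\otimes t\|$ pointwise; letting $\eb:=\CL\otimes\CL_1^{-1}$ carry the induced (continuous) metric gives $\lb=\eb+\lb_1$, and $\|e\otimes t\|\le\|t\|_{\lb_1}$ says exactly $\|e\|_{\eb}\le1$, so $\eb$ is effective with $e\in\Hhat(\eb)$ and $\mathrm{div}(e)=D$ the fixed part.

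It remains to verify norm-preservation and nefness. For a holomorphic section $s_1$ of $\CL_1$, the function $-\log\|s_1\|_{\lb_1}$ is harmonic on $X_\sigma$ away from the zeros of $s_1$ and away from the contact set $C_\sigma=\{\text{weight of }\lb_1=\phi_\sigma-\log|f_\sigma|\}$, since the envelope is maximal, hence has vanishing curvature, off $C_\sigma$. The contact set is nonempty (otherwise the obstacle exceeds the envelope by a positive constant away from the horizontal part of $D$, contradicting maximality) and $-\log\|s_1\|_{\lb_1}=+\infty$ on the zero set, so by the maximum principle the minimum of $-\log\|s_1\|_{\lb_1}$ over $X_\sigma$ is attained on $C_\sigma$; there $\|e\|_{\eb}=1$, hence $\|e\otimes s_1\|=\|s_1\|_{\lb_1}$, so $\|e\otimes s_1\|_{\sup}=\|s_1\|_{\lb_1,\sup}$ (the reverse inequality being trivial from $\|e\|_{\eb}\le1$). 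Thus $\Hhat_{\rm sef}(\lb_1)\stackrel{\otimes e}{\longrightarrow}\Hhat_{\rm sef}(\lb)$ is a sup-norm-preserving bijection. For nefness: the metric of $\lb_1$ is semipositive, and since $\otimes e$ shifts divisors by $-D$ with $D$ the fixed part of $\Hhat_{\rm sef}(\lb)$, the fixed part of $\Hhat_{\rm sef}(\lb_1)$ vanishes; hence every integral codimension-one $Y\subset X$ is missed by some strictly effective section $s_1$, and $\widehat\deg(\lb_1|_Y)$ is then a nonnegative intersection term at the finite places plus a positive archimedean contribution (absent when $Y$ is vertical), so $\ge0$. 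Theorem \ref{degred0} is proved word for word with $\Hhat$ in place of $\Hhat_{\rm sef}$.

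The main obstacle is the analytic input of the third paragraph: that the semipositive envelope of the continuous $[-\infty,+\infty]$-valued obstacle $\phi_\sigma-\log|f_\sigma|$ is again a \emph{continuous} metric, and that it is harmonic off its contact set. In dimension one this is the solution of an obstacle problem and can be proved directly or quoted from Moriwaki's construction of the positive part in the arithmetic Zariski decomposition; everything else is formal. The only case requiring a separate remark is $\deg(\CL_{1,K})=0$, where the contact set can be cofinite in some $X_\sigma$, but then the minimum of $-\log\|s_1\|_{\lb_1}$ still lies on it by compactness.
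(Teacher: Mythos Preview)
Your argument is correct in outline but takes a genuinely different route from the paper in constructing the metric on $\CL_1$. The paper exploits the \emph{finiteness} of $\Hhat_{\rm sef}(\lb)$: it first puts a continuous metric on $\CE$ by the explicit formula
\[
\|e(x)\|_{\CE}=\max\bigl\{\|s(x)\|/\|s\|_{\sup}:0\ne s\in\Hhat_{\rm sef}(\lb)\bigr\},
\]
which is continuous precisely because the maximum is over a finite set; the quotient metric on $\CL_1$ is then continuous, and norm preservation $\|se^{-1}\|_{\CL_1,\sup}=\|s\|_{\sup}$ drops out of a two-line sandwich. Only afterwards does the paper apply the semipositive envelope (its Theorem~3.3, after Berman) to this \emph{continuous} obstacle, where regularity and sup-norm preservation are standard. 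You instead take the envelope of the \emph{singular} obstacle $\phi_\sigma-\log|f_\sigma|$ directly and recover norm preservation a posteriori via a contact-set and maximum-principle argument.

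Both constructions yield a metric with the required properties, but the analytic black boxes differ in weight. The paper only needs the envelope of a bounded continuous metric, whereas you need continuity of the envelope of an unbounded obstacle together with vanishing curvature off the contact set; you rightly flag this as the main technical point and defer it to Moriwaki or to the one-dimensional obstacle problem. What the paper's finite-max trick buys is precisely the reduction to the bounded case; what your approach buys is independence from the finiteness of $\Hhat_{\rm sef}(\lb)$, so it would in principle apply to more general (infinite) linear series at the cost of the harder analysis.
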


\

Before proving the theorems, we deduce Theorem A from Theorem B using Theorem
\ref{degred0}.

Let $\lb$ be as in Theorem A. The theorem is trivial if $\hhat(\lb)= 0$.
Assume that $\hhat(\lb)\neq 0$.
As in Theorem \ref{degred0}, decompose
$$
\lb=\lb_1+ \eb.
$$
It particularly gives $\hhat(\lb)= \hhat(\lb_1)$.
For any $n\geq 0$, we have an injection
$$\Hhat(n\lb_1)  \stackrel{ e^{\otimes n}}{\longrightarrow}\Hhat(n\lb).$$
It follows that $\hhat(n\lb)\geq \hhat(n\lb_1)$, and thus
$$\volhat(\lb)\geq \volhat(\lb_1)=\lb_1^2.$$

By $\Hhat(\lb)= \Hhat(\lb_1)$, we have $h^0(\CL_{1,K})\geq r'\geq 2$.
It yields that $\deg(\CL_{1,K})\geq r'\geq 2$ if $g>0$, and $\deg(\CL_{1,K})\geq
r'-1\geq 1$ if $g=0$.
Then we are exactly in the situation to apply Theorem B to $\lb_1$.
It gives exactly Theorem A since $\deg(\CL_{1,K})\leq \deg(\CL_K)=d^\circ$.

\subsection{Construction of the decompositions}

To prove Theorem \ref{degred0} and Theorem \ref{degred1}, we need the following
regularization result in
complex geometry.

\begin{theorem} \label{metric}
Let $L$ be an ample line bundle on a projective complex manifold $M$, and let
$\| \cdot \|$ be any continuous metric on $L$. Then there is a canonical
semipositive continuous metric $\| \cdot \|'$ on $L$ such that $\| \cdot \|' \ge
\| \cdot \|$ pointwise and $\| \cdot \|'_{\rm sup}=\| \cdot \|_{\rm sup}$ as
norms on $H^0(M, m L)$ for any positive integer $m$.
\end{theorem}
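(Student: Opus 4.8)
The plan is to take $\|\cdot\|'$ to be the \emph{semipositive envelope} of $\|\cdot\|$, i.e. the smallest semipositive metric on $L$ that dominates $\|\cdot\|$ pointwise. To make this precise I would use the ampleness of $L$: by Kodaira's theorem fix a smooth metric $\|\cdot\|_0$ on $L$ with Kähler curvature form, and describe metrics additively by their local weight functions, so that $\|\cdot\|$ has a continuous weight $\phi$ and a metric is semipositive exactly when its weight is plurisubharmonic (psh) in the corresponding sense. Set $\phi':=P(\phi)$, the pointwise supremum of all psh weights $\psi\le\phi$. This family is non-empty and uniformly bounded below (it contains $\phi_0-C$ for $C$ large, where $\phi_0$ is the weight of $\|\cdot\|_0$), so its upper semicontinuous regularization $(\phi')^{*}$ is again a psh weight; since $\phi$ is continuous, $(\phi')^{*}\le\phi^{*}=\phi$, and hence $(\phi')^{*}=\phi'$ is already the largest psh weight below $\phi$. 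Let $\|\cdot\|'$ be the metric with weight $\phi'$. By construction it is semipositive, $\phi'\le\phi$ gives $\|\cdot\|'\ge\|\cdot\|$ pointwise, and the construction is manifestly canonical.

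The equality of supremum norms on $H^0(M,mL)$ then follows formally. One containment of unit balls is automatic: $\|\cdot\|'\ge\|\cdot\|$ forces $\|s\|'_{\sup}\ge\|s\|_{\sup}$ for every section $s$. For the reverse, note first that the envelope is homogeneous under powers, in the sense that the envelope at level $mL$ of the weight $m\phi$ of $\|\cdot\|^{\otimes m}$ is $m\phi'$; this is immediate because a weight $\psi$ is psh if and only if $\psi/m$ is. Next, any nonzero $s\in H^0(M,mL)$ carries the singular semipositive metric on $mL$ whose weight is $\log|s|^2$ (its curvature current is $[\mathrm{div}\,s]\ge 0$), and the condition $\|s\|_{\sup}\le 1$ for the metric $\|\cdot\|^{\otimes m}$ is \emph{exactly} the pointwise inequality $\log|s|^2\le m\phi$. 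Since $\log|s|^2$ is a psh weight dominated by $m\phi$, it is dominated by the envelope $m\phi'$, which says precisely that $\|s\|\le 1$ for the metric $(\|\cdot\|')^{\otimes m}$. Thus the two unit balls in $H^0(M,mL)$ coincide, so the norms agree (the case $H^0=0$ being vacuous).

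The only genuinely analytic ingredient — and the step I expect to be the main obstacle — is that $\phi'$ is \emph{continuous}, so that $\|\cdot\|'$ is a continuous, not merely a bounded, semipositive metric. I would handle this in two moves. First, $P$ is $1$-Lipschitz for the supremum norm, $\|P(\phi_1)-P(\phi_2)\|_{\sup}\le\|\phi_1-\phi_2\|_{\sup}$ (if $\psi$ is a psh weight $\le\phi_1$ then $\psi-\|\phi_1-\phi_2\|_{\sup}$ is a psh weight $\le\phi_2$, and symmetrically); hence, uniformly approximating the continuous weight $\phi$ by smooth weights, it suffices to treat the case of smooth $\phi$. Second, for smooth $\phi$ the envelope $P(\phi)$ is continuous — indeed $\mathcal{C}^{1,1}$ — this being the known regularity of the solution of the complex Monge–Ampère obstacle problem on the Kähler manifold $(M,\omega_0)$, available via Bedford–Taylor theory; alternatively one may invoke Demailly-type regularization together with the ampleness of $L$. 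Everything else in the argument is soft pluripotential theory.
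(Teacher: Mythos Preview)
Your proposal is correct and follows essentially the same route as the paper: construct $\|\cdot\|'$ as the plurisubharmonic envelope of $\|\cdot\|$, and reduce the continuity question to the smooth case (where it is known from Berman/Bedford--Taylor) by a uniform approximation argument. The only cosmetic differences are that you use the $1$-Lipschitz property of $P$ for the reduction where the paper uses a monotone sequence of smooth approximants, and you spell out the sup-norm equality via the competitor weight $\log|s|^2$ whereas the paper leaves this to the cited references.
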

\begin{proof}
The metric $\| \cdot \|'$ is constructed as the pluri-subharmonic envelope of
$\| \cdot \|$. See \cite[Theorem 2.3]{Berman} and \cite[Theorem 3.4]{Yu2} for
the case where
$\| \cdot \|$ is smooth. In the continuous case, we find a monotonically
increasing sequence $\{\| \cdot \|_n\}_n$
of smooth metrics which converges uniformly to $\| \cdot \|$.
Then the envelop $\{\| \cdot \|_n'\}_n$ is also monotonically increasing, and
converges uniformly to $\| \cdot \|'$.
It follows that $\| \cdot \|'$ is continuous and semipositive.
\end{proof}

Now we can prove Theorem \ref{degred0} and Theorem \ref{degred1}.
The proof is very similar to the arithmetic Fujita approximation in \cite{Yu2}.
We only consider Theorem \ref{degred1} since Theorem \ref{degred0}
is proved in the same way. We prove Theorem \ref{degred1} by the following
steps.

\subsubsection*{Step 1.}
Denote by $Z$ the fixed part of $\Hhat_{\rm sef}(X, \lb)$.
Set $\CE$ to be the line bundle on $X$ associated to $Z$, and let $e\in
H^0(\CE)$ be
the section defining $Z$. Define a line bundle $\CL_1$ on $X$ by the
decomposition
$$
\CL=\CE+\CL_1.
$$
We need to construct suitable metrics on $\CE$ and $\CL_1$.

\subsubsection*{Step 2.}
There are continuous metrics $\|\cdot\|_{\CE}$ on $\CE$ and
$\|\cdot\|_{\CL_1}$ on $\CL_1$ satisfying:
\begin{itemize}
\item[(a)] $\lb=(\CE,\|\cdot\|_{\CE})+(\CL_1,\|\cdot\|_{\CL_1})$.
\item[(b)] $\|e\|_{\CE, \sup}= 1$.
\item[(c)] The map $\CL_1\to \CL$ defined by tensoring with $e$ induces a
bijection
$$\Hhat_{\rm sef}(\CL_1,\|\cdot\|_{\CL_1})\longrightarrow  \Hhat_{\rm
sef}(\lb)$$
which keeps the supremum norms.
\end{itemize}

The key is that $\Hhat_{\rm sef}(X, \lb)$ is a finite set.
Define the metric  $\|\cdot\|_{\CE}$ on $\CE$ by assigning any $x\in X(\CC)$ to
$$
\|e(x)\|_{\CE}=\max\{ \|s(x)\|/\| s \|_{\sup}:\ s \in \Hhat_{\rm sef}(X, \lb), \
s\neq 0\}.
$$
We claim that it gives a continuous metric on $\CE$.

In fact, it suffices to check that, for some continuous metric
$\|\cdot\|_{\CE,0}$ on $\CE$,
the quotient $\|e(x)\|_{\CE}/\|e(x)\|_{\CE,0}$ extends to a strictly positive
and continuous function of $x\in X(\CC)$.
Denote by $\|\cdot\|_{\CL_1,0}=\|\cdot\|/\|\cdot\|_{\CE,0}$ the quotient metric
on $\CL_1$. Then
$$
\frac{\|e(x)\|_{\CE}}{\|e(x)\|_{\CE,0}}
=\max\left\{ \frac{\|(se^{-1})(x)\|_{\CL_1,0}}{\| s \|_{\sup}}:
\ s \in \Hhat_{\rm sef}(X, \lb), \ s\neq 0\right\}.
$$
It is obviously continuous.
It is strictly positive since the set
$$\{se^{-1}: s \in \Hhat_{\rm sef}(X, \lb)\}$$
is base-point-free on $X(\CC)$,
following from the construction that $\mathrm{div}(e)$ is the codimension-one
part of
the base locus of $\Hhat_{\rm sef}(X, \lb)$ on $X$.

Define the metric $\|\cdot\|_{\CL_1}$ on $\CL_1$ by the decomposition
$$\lb=(\CE,\|\cdot\|_{\CE})+(\CL_1,\|\cdot\|_{\CL_1}).$$
Then (a) and (b) are satisfied.
It is also easy to check (c).
In fact, for any nonzero $s\in \Hhat_{\rm sef}(\lb)$,
we have  $se^{-1} \in H^0(\mathcal{L}')$ by definition.
It suffices to check
$$
\|se^{-1}\|_{\CL_1, \sup} = \| s \|_{\sup}.
$$
By definition of the metrics,
$$
\| s(x) \|\leq \|(se^{-1})(x)\|_{\CL_1} \le \| s \|_{\sup}, \quad x \in
X(\mathbb
C).
$$
Taking supremum, we have
$$
\| s \|_{\sup}\leq \|se^{-1}\|_{\CL_1,\sup} \le \| s \|_{\sup}.
$$
The condition is satisfied.

\subsubsection*{Step 3.}

We can further adjust the metrics in Step 2, such that they keep satisfying
(a), (b), and (c) in Step 2, and such that $\|\cdot\|_{\CL_1}$
is continuous and semipositive.

If $\deg(\CL_{1,\QQ})>0$, applying Theorem \ref{metric} to $\|\cdot\|_{\CL_1}$,
we
obtain a semipositive metric $\|\cdot\|_{\CL_1}'$.
Replace $\|\cdot\|_{\CL_1}$ by $\|\cdot\|_{\CL_1}'$,
and replace $\|\cdot\|_{\CE}$ by the  corresponding quotient metric. It is
easy to see that it keeps all the properties in Step 2.

If $\deg(\CL_{1,\QQ})=0$, then $\CL_{1,\QQ}=\CO_{X_\QQ}$ since $H^0(\CL_1)\neq
0$ by
construction.
Theorem \ref{metric} still holds in this case.
In fact, the metric $\|\cdot\|_{\CL_1}$ is identified with a continuous
function on $X(\CC)$.
Define the metric $\|\cdot\|_{\CL_1}'$ as follows.
On each connected component $M$ of $X(\CC)$, set $\|\cdot\|_{\CL_1}'$ to be the
constant metric given by
the supremum norm $\|1\|_{\CL_1, \sup_M}$ on $M$.
Then the remaining part is as in the case $\deg(\CL_{1,\QQ})>0$.

\subsubsection*{Step 4.}

Finally, we prove that the hermitian line bundle $\lb_1=(\CL_1,
\|\cdot\|_{\CL_1})$ is nef on $X$. By our construction,
We only need to show that $\widehat\deg(\lb_1|_Y)\geq 0$ for any integral
subscheme $Y$.
By definition, the set $\Hhat_{\rm sef}(\lb_1)$ has no fixed part.
For any integral subscheme $Y$ of $X$, we can find a section $s\in\Hhat_{\rm
sef}(\lb_1)$
nonvanishing on $Y$. Then $\widehat\deg(\lb_1|_Y)\geq 0$ by this section.

\section{Proof of Theorem B}

In this section, we use the construction above to prove Theorem B.
We first prove a trivial bound, and then prove the theorem.

\subsection{A trivial Bound}

The following is an easy bound on $\hhat(\lb)$, which serves as the last step of
our reduction.

\begin{prop}\label{easy}

Let $\lb$ be a nef hermitian line bundle on $X$ with $\deg (\CL_K)>0$.
Denote by $r^{\text{-}}$ the $\ZZ$-rank of the $\ZZ$-submodule of $H^0(\CL)$
generated by
$\Hsefhat(\lb)$.
Then we have
$$
\hsefhat(\lb) \le \frac{r^{\text{-}}}{\deg (\CL_\QQ) } \lb^2 +
r^{\text{-}}\log3.
$$
The same result holds for $\hhat(\lb)$ by replacing $r^{\text{-}}$ by the
$\ZZ$-rank of the $\ZZ$-submodule of $H^0(\CL)$ generated by $\Hhat(\lb)$.
\end{prop}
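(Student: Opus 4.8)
The plan is to bound $\hsefhat(\lb)$ by first reducing to the case where the strictly effective sections have no fixed part, and then using the elementary fact that a line bundle on an arithmetic surface whose generic fiber carries a base-point-free linear system of dimension $r^{\text{-}}$ must have arithmetic self-intersection controlled from below in terms of its successive minima (equivalently, in terms of $\hsefhat$). Concretely, I would apply Theorem \ref{degred1} to write $\lb=\eb+\lb_1$, where $e\in\Hhat(\eb)$ defines the fixed part of $\Hsefhat(\lb)$, and $\otimes e$ gives a norm-preserving bijection $\Hsefhat(\lb_1)\to\Hsefhat(\lb)$. Hence $\hsefhat(\lb)=\hsefhat(\lb_1)$, the $\ZZ$-rank $r^{\text{-}}$ is unchanged, and $\Hsefhat(\lb_1)$ has empty fixed part; also $\deg(\CL_{1,\QQ})\le\deg(\CL_\QQ)$ by effectivity of $\eb$, and $\lb_1$ is nef. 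So it suffices to prove the inequality for $\lb_1$, i.e. we may assume from the start that $\Hsefhat(\lb)$ has no fixed part.

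Next I would relate $\hsefhat(\lb)$ to the successive minima. Write $\mb=(H^0(\CL),\norm_{\sup})$ and let $\mu_1\ge\cdots\ge\mu_{r^{\text{-}}}$ denote the top logarithmic minima (where $r^{\text{-}}$ is the rank of the module generated by $\Hsefhat(\lb)$). By Theorem \ref{gs} applied to the sublattice generated by strictly effective sections (using the limit trick relating $\hhat$ to $\hsefhat$, exactly as in the proof of Theorem \ref{gs}), one gets
\begin{equation*}
\hsefhat(\lb)\le \sum_{i=1}^{r^{\text{-}}}\max\{\mu_i,0\}+r^{\text{-}}\log 3.
\end{equation*}
So the task is reduced to bounding $\sum_i\max\{\mu_i,0\}$ by $\frac{r^{\text{-}}}{\deg(\CL_\QQ)}\lb^2$. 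For this I would argue that for each threshold $\mu>0$ with $\mathrm{rank}\langle\Hhat(\lb(-\mu))\rangle_\ZZ\ge i$, the sections in $\Hhat(\lb(-\mu))$ still generate a base-point-free system on the generic fiber after passing far enough down the filtration (since the full $\Hsefhat(\lb)$ is base-point-free and the minima $\mu_i$ are strictly positive only when there are strictly effective sections to spare), so one obtains a horizontal effective cycle in the linear system giving the intersection-theoretic estimate $\lb^2\ge (\deg\CL_\QQ)\,\mu_i$ — more precisely, the arithmetic Hodge index / the positivity of $\lb\cdot\lb(-\mu_i)$ together with $e_{\lb(-\mu_i)}\ge 0$ yields $\lb^2\ge \deg(\CL_\QQ)\,\mu_i$ for each $i\le r^{\text{-}}$. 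Summing over $i=1,\dots,r^{\text{-}}$ gives $\sum_i\max\{\mu_i,0\}\le \frac{r^{\text{-}}}{\deg(\CL_\QQ)}\lb^2$, and combined with the displayed inequality this is exactly the claim.

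The main obstacle I anticipate is the intersection-theoretic step: showing cleanly that each positive logarithmic minimum $\mu_i$ (for $i\le r^{\text{-}}$) contributes at least $\deg(\CL_\QQ)\,\mu_i$ to $\lb^2$. The cheapest route is probably: for $\mu<\mu_i$, the lattice $\langle\Hhat(\lb(-\mu))\rangle_\ZZ$ has rank $\ge i\ge 1$, so $\lb(-\mu)$ is effective, hence $e_{\lb(-\mu)}\ge$ the height of any horizontal component of a section's divisor, and in particular $\lb\cdot\lb(-\mu)\ge 0$ by nefness of $\lb$ and effectivity plus semipositivity of $\lb(-\mu)$'s relevant part; expanding $\lb\cdot\lb(-\mu)=\lb^2-\mu\deg(\CL_\QQ)\ge 0$ forces $\mu\le\lb^2/\deg(\CL_\QQ)$, so $\mu_i\le\lb^2/\deg(\CL_\QQ)$ for every $i\ge 1$, which even avoids needing $i\le r^{\text{-}}$ in that step. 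One must be slightly careful that $\lb(-\mu)$ need not be nef (its vertical degrees could go negative), but since its generic-fiber degree stays $=\deg\CL_\QQ>0$ and it is effective, the product $\lb\cdot\lb(-\mu)\ge 0$ still holds because $\lb$ is nef and $\lb(-\mu)$ is represented by an effective arithmetic divisor with semipositive archimedean part; this is where the argument needs to be written with care. The factor $r^{\text{-}}$ then simply counts how many of the $\mu_i$ we sum, and the $r^{\text{-}}\log 3$ is the Gillet–Soulé error term. The $\hhat$ statement follows verbatim with $\Hhat$ in place of $\Hsefhat$ throughout.
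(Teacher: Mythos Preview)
Your intersection step (the ``cheapest route'') is correct and is essentially the paper's core argument: a single section $s$ with $\|s\|_{\sup}<e^{-\alpha}$ gives $\lb^2=\lb\cdot\mathrm{div}(s)-\int_{X(\CC)}\log\|s\|\,c_1(\lb)>\alpha\deg(\CL_\QQ)$ by nefness of $\lb$ and semipositivity of $c_1(\lb)$. Your worry about $\lb(-\mu)$ not being nef is irrelevant here, since only nefness of $\lb$ against the effective arithmetic divisor of $s$ is used. However, the route through successive minima does not give the stated error term: Theorem~\ref{gs} yields $\hsefhat(\lb)\le\sum_i\max\{\mu_i,0\}+r^{\text{-}}\log 3+2r^{\text{-}}\log r^{\text{-}}$, not $+\,r^{\text{-}}\log 3$ alone, so as written you only prove the weaker bound with an extra $2r^{\text{-}}\log r^{\text{-}}$. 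Separately, the initial reduction via Theorem~\ref{degred1} is both unnecessary (your cheapest route never uses base-point-freeness) and unjustified as stated: the bound for $\lb_1$ has $\lb_1^2/\deg(\CL_{1,\QQ})$ on the right, and there is no reason this is at most $\lb^2/\deg(\CL_\QQ)$, since both numerator and denominator decrease.

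The paper's proof avoids all of this. Set $\alpha=\frac{1}{r^{\text{-}}}\hsefhat(\lb)-\log 3-\epsilon$; apply Proposition~\ref{norm1} (to the rank-$r^{\text{-}}$ submodule generated by $\Hsefhat(\lb)$) to get $\hsefhat(\lb(-\alpha))>0$; take a section $s$ with $-\log\|s\|_{\sup}>\alpha$; conclude $\lb^2>\alpha\deg(\CL_\QQ)$ from the intersection formula above; let $\epsilon\to 0$. This uses only Proposition~\ref{norm1}, whose error term is exactly $r^{\text{-}}\log 3$ with no $\log r^{\text{-}}$ factor, together with the same intersection computation you identified.
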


\begin{proof}

Denote
$$\alpha=\frac{1}{r^\text{-}}\hsefhat(\lb)-\log3-\epsilon, \quad \epsilon>0.$$
By Proposition \ref{norm1},
$$
\hsefhat(\lb(-\alpha)) \geq \hsefhat(\lb)- (\alpha+\log3)r^\text{-} = \epsilon
r^\text{-} >0.
$$
It follows that there is a section
$s\in H^0(\CL)$ with
$$-\log\|s\|_{\sup}> \alpha.$$
Then we have
$$
\lb^2= \lb\cdot \mathrm{div}(s)- \int_{X(\CC)} \log\|s\| c_1(\lb) > \alpha\
\deg(\CL_\QQ).
$$
Therefore, we have
$$\frac{1}{r^\text{-}}\hsefhat(\lb)-\log3-\epsilon <
\frac{\lb^2}{\deg(\CL_\QQ)}.$$
Take $\epsilon\to 0$. The inequality follows.

\end{proof}

\begin{remark}
The result can be extended to arithmetic varieties of any dimensions without
extra work.
\end{remark}

\subsection{The reduction process}

Let $\lb$ be a nef line bundle.
We are going to apply Theorem \ref{degred1} to reduce $\lb$ to ``smaller'' nef
line bundles.
The problem is that the fixed part of $\lb$ may be empty, and then
Theorem \ref{degred1} is a trivial decomposition.
The idea is to enlarge the metric of $\lb$ by constant multiples to create base
points.
To keep the nefness, the largest constant multiple we can use gives the case
that the absolute minimum is 0.
The following proposition says that the situation exactly meets our requirement.

\begin{prop}\label{fixedpart}

Let $X$ be a regular arithmetic surface,
and $\lb$ be a nef hermitian line bundle on
$X$ satisfying
$$\hhat_{\rm sef}(\lb)>0, \quad e_{\lb}=0.$$
Then the base locus of $\Hhat_{\rm sef}(\lb)$
contains some horizontal divisor of $X$.

\end{prop}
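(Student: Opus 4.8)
The plan is to argue by contradiction: suppose the base locus $Z$ of $\Hsefhat(\lb)$ in $X$ contains no horizontal divisor, and deduce $e_\lb>0$, contradicting the hypothesis. First I would record two easy facts. Since $\hsefhat(\lb)>0$, the set $\Hsefhat(\lb)$ — a bounded symmetric subset of the lattice $H^0(X,\CL)$, hence finite — contains a nonzero section, so $Z\neq X$. Moreover $\delta:=\min\{-\log\|s\|_{\sup} : s\in\Hsefhat(\lb),\ s\neq 0\}$ is the minimum of a finite nonempty set of strictly positive reals, so $\delta>0$.

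Next I would show that every irreducible horizontal divisor $D$ of $X$ satisfies $D\not\subseteq Z$. Indeed $X$ is integral of dimension $2$, so its only codimension-zero integral closed subscheme is $X$ itself; if $D\subseteq Z$, the irreducible component of $Z$ through the generic point of $D$ would be either all of $X$ (excluded since $Z\neq X$) or the one-dimensional horizontal subscheme $D$ (excluded by assumption). By the definition of the base locus, $D\not\subseteq Z$ exactly means that some $s\in\Hsefhat(\lb)$ generates $\CL$ at the generic point of $D$, i.e. its restriction $s|_D$ is a nonzero global section of the line bundle $\CL|_D$ on the arithmetic curve $D$.

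Then I would bound $h_\lb(D)$ from below using $s|_D$. Computing $\widehat\deg(\lb|_D)$ from this section, it is the sum of the finite (closed-point) contribution of $\mathrm{div}(s|_D)$, which is $\geq 0$ because the divisor of an honest section is effective, and the archimedean contribution $\sum_{x\in D(\CC)}(-\log\|s(x)\|)$ weighted by the local degrees. Each archimedean term is $\geq -\log\|s\|_{\sup}\geq\delta>0$, and the total archimedean weight of $D(\CC)$ is $\deg D_\QQ=\sum_{v\mid\infty}n_v$. Hence $\widehat\deg(\lb|_D)\geq\delta\cdot\deg D_\QQ$, i.e. $h_\lb(D)\geq\delta$. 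Since $D$ was an arbitrary irreducible horizontal divisor, $e_\lb=\inf_D h_\lb(D)\geq\delta>0$, the desired contradiction. Therefore $Z$ must contain a horizontal divisor.

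The only step requiring care is the third paragraph: one must fix conventions for the Arakelov degree of a hermitian line bundle on the (reduced, possibly non-normal) one-dimensional scheme $D$, observe that the finite part of $\widehat\deg$ is non-negative for a genuine section, and match the archimedean multiplicities with the identity $\deg D_\QQ=\sum_{v\mid\infty}n_v$ counting the points of $D(\CC)$. All of this is standard, so I do not expect a genuine obstacle; note that neither the nefness of $\lb$ nor the regularity of $X$ is actually used here — only $e_\lb=0$, $\hsefhat(\lb)>0$, and that $X$ is integral of dimension $2$.
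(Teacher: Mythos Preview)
Your proof is correct and uses essentially the same key estimate as the paper: if some $s\in\Hsefhat(\lb)$ does not vanish on a horizontal irreducible divisor $D$, then $h_{\lb}(D)\geq -\log\|s\|_{\sup}>0$. The paper argues constructively by showing that the nonempty set $S=\{D:h_{\lb}(D)=0\}$ lies in the base locus, whereas you run the contrapositive, but the content is the same; your remark that neither nefness nor regularity is actually needed is also correct.
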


\begin{proof}

Denote by $S$ the set of horizontal irreducible
divisors $D$ of $X$ such that $h_{\lb}(D)=0$.
The result follows from the properties that $S$ is non-empty and contained in
the base
locus of $\Hhat_{\rm sef}(\lb)$.

First, $S$ is non-empty. Note that the absolute minimum of $\lb$ is 0, so it
suffices to prove that 0 is
not an accumulation point of the range of $h_{\lb}$.
Choose any nonzero section $s \in \Hhat_{\rm sef}(\lb)$.
For any horizontal irreducible divisor $D$ not contained in the support of ${\rm
div}(s)$,
one has
$$
h_{\lb}(D)=\frac{1}{\deg(D_\QQ)} ({\rm div}(s)\cdot D-\log\|s\|(D(\CC))) \geq
-\log\|s\|_{\sup} >0.
$$
It follows that $0$ is not an accumulation point, and there must be an
irreducible component of
$\mathrm{div}(s)$ lying in $X$.

Second, every element of $S$ is contained in the base
locus of $\Hhat_{\rm sef}(\lb)$.
Take any $D\in S$ and $s\in \Hhat_{\rm sef}(\lb)$.
If $s$ does not vanish on $D$, then the above estimate gives
$$
h_{\lb}(D)\geq -\log\|s\|_{\sup} >0.
$$
It is a contradiction.

\end{proof}

Now let us try to prove Theorem B by Theorem \ref{degred1}.
Denote by $c=e_{\lb}$ the absolute minimum.
By definition, $\lb(-c)$ is still nef, and its absolute minimum is 0.
If $\hhat_{\rm sef}(\lb(-c))\neq 0$, applying Theorem \ref{degred1} to
$\lb(-c)$,
we obtain a decomposition
$$
\lb(-c)=\eb+\lb_1
$$
with $\eb$ effective and $\lb_1$ nef, which gives
$$\hhat_{\rm sef}(\lb(-c))=\hhat_{\rm sef}(\lb_1).$$
By Proposition \ref{norm1},
$$
\hhat_{\rm sef}(\lb) \leq \hhat_{\rm sef}(\lb(-c))+ (c+\log 3) h^0(\CL_{\QQ}).
$$

Note that
$$
\lb(-c)^2-\lb_1^2= \eb\cdot (\lb(-c)+\lb_1) \geq 0.
$$
Thus
$$
\lb^2 = \lb(-c)^2+ 2cd \geq \lb_1^2+2cd.
$$
Therefore,
$$
\hhat_{\rm sef}(\lb)-\frac12 \lb^2
\leq \hhat_{\rm sef}(\lb_1)-\frac12 \lb_1^2 + \deg(\CL_{\QQ})\log 3.
$$

By Proposition \ref{fixedpart}, the degree decreases:
$$\deg(\lb_{\QQ})>\deg(\lb_{1,\QQ}).$$
Then we can reduce the theorem for $\lb$ to that for $\lb_1$.
One problem is that, when we keep the reduction process to obtain $\lb_2, \cdots
$, the accumulated error
term
$$
\deg(\CL_{\QQ})\log 3+ \deg(\CL_{1,\QQ})\log 3+\deg(\CL_{2,\QQ})\log 3+\cdots
$$
may grow as
$$
(d+(d-1)+\cdots +1) \log 3=\frac12 d(d+1)\log 3.
$$
It is too big for our consideration.

The key of our solution of the problem is Proposition \ref{secred}.
We put all the sections $\Hhat(\lb_i)$ in one space,
the error term will be decreased to a multiple of $d\log d$ in Lemma
\ref{onestep}. See also the remark after Proposition \ref{secred}.

For convenience of application, we describe the
total construction as a theorem. The proof of Theorem B will be given in
next section.

\begin{theorem}\label{degred}

Let $X$ be a regular arithmetic surface, and let $\lb$ be a nef hermitian line
bundle on
$X$. There is an integer $n\geq 0$, and a sequence of triples
$$
\{(\lb_i, \eb_i, c_i): \ i=0,1, \cdots, n\}
$$
satisfying the following properties:
\begin{itemize}
\item $(\lb_0, \eb_0, c_0)=(\lb, \overline\CO_X, e_{\lb})$.
\item For any $i=0,\cdots, n$, the constant $c_i=e_{\lb_{i}}\geq 0$ is the
absolute minimum of $\lb_{i}$.
\item $\hhat_{\rm sef}(X, \lb_{i}(-c_i))>0$ for any $i=0,\cdots, n-1$.
\item For any $i=0,\cdots,n-1$,
$$
\lb_{i}(-c_{i})=  \lb_{i+1}+ \eb_{i+1}
$$
is a decomposition of $\lb_{i}(-c)$ as in Theorem \ref{degred1}.
\item $\hhat_{\rm sef}(X,\lb_{n}(-e_{\lb_n}))=0.$
\end{itemize}
The following are some properties by the construction:
\begin{itemize}
\item For any $i=0,\cdots, n$, $\lb_i$ is nef and every $\eb_i$ is effective.
\item $\deg(\CL_{0, \mathbb Q})> \deg(\CL_{1, \mathbb Q}) >\cdots > \deg(\CL_{n,
\mathbb Q})$.
\item For any $i=0,\cdots, n-1$, there is a section $e_{i+1}\in
\Hhat(\eb_{i+1})$ inducing a bijection
$$
\Hhat_{\rm sef}(\lb_{i+1})  \longrightarrow  \Hhat_{\rm sef}(\lb_{i}(-c_{i}))
$$
which keeps the supremum norms.
\end{itemize}

\end{theorem}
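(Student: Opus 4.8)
The plan is to construct the sequence by induction on $i$ and then invoke the strict decrease of the generic degree to force the recursion to stop. We begin by setting $(\lb_0,\eb_0,c_0)=(\lb,\ob_X,e_{\lb})$, noting $c_0=e_{\lb}\ge 0$ since $\lb$ is nef. Suppose $(\lb_i,\eb_i,c_i)$ has been constructed with $\lb_i$ nef and $c_i=e_{\lb_i}\ge 0$. If $\hhat_{\rm sef}(X,\lb_i(-c_i))=0$ we set $n=i$ and stop. Otherwise $\lb_i(-c_i)$ is nef with $\hhat_{\rm sef}(\lb_i(-c_i))>0$ and absolute minimum $e_{\lb_i(-c_i)}=e_{\lb_i}-c_i=0$; so Theorem \ref{degred1} applies to $\lb_i(-c_i)$ and yields a decomposition $\lb_i(-c_i)=\eb_{i+1}+\lb_{i+1}$ with $\eb_{i+1}$ effective and $\lb_{i+1}$ nef, together with a section $e_{i+1}\in\Hhat(\eb_{i+1})$ such that $\mathrm{div}(e_{i+1})$ is the fixed part of $\Hhat_{\rm sef}(\lb_i(-c_i))$ and the map $\otimes\, e_{i+1}$ is a supremum-norm-preserving bijection $\Hhat_{\rm sef}(\lb_{i+1})\to\Hhat_{\rm sef}(\lb_i(-c_i))$. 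Put $c_{i+1}=e_{\lb_{i+1}}\ge 0$ and iterate. Provided the recursion terminates, every bullet in the conclusion except the degree inequality is immediate: $\lb_i$ is nef and $\eb_i$ is effective for all $i$ (with $\eb_0=\ob_X$ trivially effective), and the required norm-preserving bijections are the maps $\otimes\, e_{i+1}$.

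The main point to establish — and the one on which termination depends — is the strict inequality $\deg(\CL_{i,\QQ})>\deg(\CL_{i+1,\QQ})$ for $0\le i\le n-1$. Restricting $\lb_i(-c_i)=\eb_{i+1}+\lb_{i+1}$ to the generic fibre, and using that rescaling the metric does not change $\CL_{i,\QQ}$, we obtain $\deg(\CL_{i,\QQ})=\deg(\CE_{i+1,\QQ})+\deg(\CL_{i+1,\QQ})$, so it suffices to show $\deg(\CE_{i+1,\QQ})>0$. Since $\mathrm{div}(e_{i+1})$ is the fixed part of $\Hhat_{\rm sef}(\lb_i(-c_i))$, this reduces to showing that this fixed part contains a horizontal divisor of $X$, which is exactly Proposition \ref{fixedpart} applied to $\lb_i(-c_i)$ — legitimate because $\lb_i(-c_i)$ is nef with $\hhat_{\rm sef}(\lb_i(-c_i))>0$ and absolute minimum $0$; a horizontal divisor has codimension one in $X$, so once it lies in the base locus of $\Hhat_{\rm sef}(\lb_i(-c_i))$ it lies in the fixed part. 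This is precisely where the choice $c_i=e_{\lb_i}$ (the \emph{absolute} minimum) is used essentially: it is what guarantees that each step genuinely decreases the degree rather than stalling. I expect this to be the only substantive point; everything else is bookkeeping around Theorem \ref{degred1}.

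Termination then follows formally. Each $\lb_i$ is nef, so $\deg(\CL_{i,\QQ})$ is a nonnegative integer, and a strictly decreasing sequence of nonnegative integers must be finite; hence there is some $n\ge 0$ with $\hhat_{\rm sef}(X,\lb_n(-e_{\lb_n}))=0$, and the construction stops there. Assembling the facts verified en route — nefness of the $\lb_i$, effectivity of the $\eb_i$, the chain $\deg(\CL_{0,\QQ})>\cdots>\deg(\CL_{n,\QQ})$, and the norm-preserving bijections $\Hhat_{\rm sef}(\lb_{i+1})\to\Hhat_{\rm sef}(\lb_i(-c_i))$ — yields all the assertions of the theorem.
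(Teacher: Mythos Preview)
Your proof is correct and follows essentially the same approach as the paper: inductively apply Theorem~\ref{degred1} to $\lb_i(-c_i)$, use Proposition~\ref{fixedpart} to guarantee a horizontal component in the fixed part and hence a strict drop in generic degree, and conclude termination from the resulting decreasing sequence of nonnegative integers. The paper's own proof is a terse four-sentence sketch of exactly this argument; you have simply spelled out the bookkeeping in full.
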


\begin{proof}
The triple $(\lb_{i+1}, \eb_{i+1}, c_{i+1})$ is obtained by decomposing
$\lb_{i}(-c_{i})$.
The extra part is that Proposition \ref{fixedpart} ensures the degrees on the
generic fiber
decreases strictly.
The process terminates if $\hhat_{\rm sef}(X, \lb_{i}(-c_i))=0.$
It always terminates since $\deg(\CL_{i, \mathbb Q})$ decreases.
\end{proof}

\subsection{Case of positive genus} \label{thmapg}

Here we prove Theorem B in the case $g>0$.
Assume the notations of Theorem \ref{degred}.
We first bound the changes of $\hsefhat(\lb_j)$ and $\lb_j^2 $.

Recall that Theorem \ref{degred} starts with a nef line bundle $\lb_0=\lb$
and constructs the sequence
$$
(\lb_i, \eb_i,c_i), \quad i=0, \cdots, n.
$$
Here $\lb_i$ is nef and $\eb_i$ is effective, and $c_i=e_{\lb_i}\geq 0$.
In particular, $\lb_{i}(-c_{i})$ is still nef.
For any $i=0,\cdots, n-1$, the decomposition
$$
\lb_{i}(-c_i)=  \lb_{i+1}+ \eb_{i+1}
$$
yields a bijection
$$
\Hhat_{\rm sef}(\lb_{i+1})  \longrightarrow  \Hhat_{\rm sef}(\lb_{i}(-c_{i})).
$$
It is given by tensoring some distinguished element $e_i\in \Hhat(\eb_i)$.
It is very important that the bijection keeps the supremum norms.
In the following, we denote
$$
\lb_{i}'=\lb_{i}(-c_i), \quad i=0,\cdots, n.
$$

\begin{prop} \label{onestep}
For any $j=0,\cdots, n$, one has
\begin{eqnarray*}
\lb^2 & \geq &  \lb_j'^2+ 2\sum_{i=0}^{j} d_{i} c_i,\\
\hsefhat(\lb) &\leq& \hsefhat(\lb_j')+
\sum_{i=0}^{j} r_{i} c_i +4r_0 \log r_0+ 2r_0 \log 3.
\end{eqnarray*}
Here we denote $d_{i}=\deg(\mathcal L_{i, \mathbb Q})$ and
$r_i=h^0(\CL_{i,\QQ})$.
\end{prop}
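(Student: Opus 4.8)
The plan is to prove the two inequalities separately, tracking the chain
\[
\lb_0 \;\longrightarrow\; \lb_0' \;\longrightarrow\; \lb_1 \;\longrightarrow\; \lb_1' \;\longrightarrow\; \cdots \;\longrightarrow\; \lb_j'
\]
supplied by Theorem \ref{degred}, where $\lb_i'=\lb_i(-c_i)$, $d_i=\deg(\CL_{i,\QQ})$, $r_i=h^0(\CL_{i,\QQ})$, and I set $\alpha_i:=c_0+\cdots+c_{i-1}$ (with $\alpha_0:=0$), an increasing sequence since each $c_i=e_{\lb_i}\ge0$. For the intersection inequality I would first record two facts. Since $\lb_i'$ is obtained from $\lb_i$ by scaling the metric by the constant $e^{c_i}$, the standard formula for a constant twist gives $\lb_i^2=\lb_i'^{2}+2c_id_i$. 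Next, in the decomposition $\lb_i'=\lb_{i+1}+\eb_{i+1}$ of Theorem \ref{degred1}, both $\lb_i'$ (being $\lb_i(-e_{\lb_i})$) and $\lb_{i+1}$ are nef and $\eb_{i+1}$ is effective, so
\[
\lb_i'^{2}-\lb_{i+1}^2=\eb_{i+1}\cdot(\lb_i'+\lb_{i+1})\ge0 .
\]
Combining, $\lb_i^2\ge\lb_{i+1}^2+2c_id_i$; telescoping this from $i=0$ to $i=j-1$ and then applying the first fact once more to $\lb_j$ yields $\lb^2=\lb_0^2\ge\lb_j'^{2}+2\sum_{i=0}^{j}c_id_i$.

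For the section inequality the idea is to realize all the modules $H^0(\CL_i)$ inside $H^0(\CL)$ and then invoke Proposition \ref{secred}. The bijection $\Hsefhat(\lb_{i+1})\xrightarrow{\ \otimes e_{i+1}\ }\Hsefhat(\lb_i')$ from Theorem \ref{degred} preserves supremum norms, so for every $t\ge0$ it restricts to a bijection
\[
\Hsefhat(\lb_{i+1}(-t))\ \xrightarrow{\ \sim\ }\ \Hsefhat(\lb_i'(-t))=\Hsefhat(\lb_i(-c_i-t)),
\]
because on each side the strictly effective sections of the $t$-twist are exactly the strictly effective sections of supremum norm $<e^{-t}$, and the isometry matches these subsets. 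Iterating, multiplication by $E_i:=e_1\otimes\cdots\otimes e_i\in H^0(\CL\otimes\CL_i^\vee)$ furnishes, for all $t\ge0$, a supremum-norm-preserving bijection $\Hsefhat(\lb_i(-t))\xrightarrow{\sim}\Hsefhat(\lb(-\alpha_i-t))$. In particular $\hsefhat(\lb(-\alpha_i))=\hsefhat(\lb_i)$ for $0\le i\le j$, and $\hsefhat(\lb(-\alpha_{j+1}))=\hsefhat(\lb_j(-c_j))=\hsefhat(\lb_j')$.

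Now I would apply Proposition \ref{secred} (the $\hsefhat$-version) to $\mb=(H^0(\CL),\|\cdot\|_{\sup})$ and the sequence $0=\alpha_0\le\cdots\le\alpha_{j+1}$. Since $\alpha_{i+1}-\alpha_i=c_i$, it gives
\[
\hsefhat(\lb)\le\hsefhat(\lb_j')+\sum_{i=0}^{j}\rho_ic_i+4\rho_0\log\rho_0+2\rho_0\log3,
\]
where $\rho_i$ denotes the rank of the $\ZZ$-submodule of $H^0(\CL)$ generated by $\Hsefhat(\lb(-\alpha_i))$. Finally $\Hsefhat(\lb(-\alpha_i))$ is the image under $\otimes E_i$ of $\Hsefhat(\lb_i)\subseteq H^0(\CL_i)$, and $\otimes E_i\colon H^0(\CL_i)\to H^0(\CL)$ is injective since $E_i\neq0$ and $X$ is integral; hence $\rho_i\le\mathrm{rank}_{\ZZ}H^0(\CL_i)=r_i$. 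Replacing each $\rho_i$ by $r_i$ — legitimate because $c_i\ge0$ and $x\mapsto4x\log x+2x\log3$ is nondecreasing on $[0,\infty)$, the case $\rho_0=0$ (where $\hsefhat(\lb)=0$) being trivial — yields the asserted bound.

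The step I expect to require the most care is the compatibility of the Theorem \ref{degred1} bijections with the constant twists: promoting ``$\otimes e_{i+1}$ is a supremum-norm isometry on strictly effective sections'' to ``it identifies $\Hsefhat(\lb_{i+1}(-t))$ with $\Hsefhat(\lb_i(-c_i-t))$ for all $t\ge0$''. Without this identification one cannot align $\hsefhat(\lb(-\alpha_i))$ with $\hsefhat(\lb_i)$, and the reduction to Proposition \ref{secred} collapses. The remaining points — choosing the partial-sum sequence $\{\alpha_i\}$ so that the coefficients produced by Proposition \ref{secred} come out as $r_ic_i$ rather than the weaker $\sum r_i\log3$, and bounding the auxiliary ranks $\rho_i$ by $r_i$ via the injectivity of multiplication by $E_i$ — are routine.
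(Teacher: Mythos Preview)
Your proof is correct and follows exactly the paper's route: telescope $\lb_i^2=\lb_i'^2+2c_id_i\ge\lb_{i+1}^2+2c_id_i$ for the first inequality, and for the second embed each $\Hsefhat(\lb_i)$ into $H^0(\CL)$ via $\otimes E_i$ to identify $\Hsefhat(\lb(-\alpha_i))$ with $\Hsefhat(\lb_i)$ and then invoke Proposition~\ref{secred} with the partial sums $\alpha_i$. One cosmetic slip: the function $x\mapsto4x\log x+2x\log3$ is not nondecreasing on all of $[0,\infty)$ but only for $x\ge1$; this is harmless since $\rho_0$ and $r_0$ are positive integers whenever $\hsefhat(\lb)>0$, and you already noted the case $\rho_0=0$ is trivial.
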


\begin{proof}

Denote $\alpha_{0}=0$ and
$$\alpha_i=c_0+\cdots+c_{i-1}, \quad i=1,\cdots, n.$$
The key is the bijection
$$
\Hhat_{\rm sef}(\lb_{i})\longrightarrow \Hhat_{\rm sef}(\lb(-\alpha_{i})).
$$
It is given by tensoring the section $e_1\otimes \cdots\otimes e_i$.
Denote by $r_i^{\text{-}}$ the rank of the $\mathbb Z$-submodule of $H^0(\CL)$
generated by
$\Hhat(\lb(-\alpha_{i}))$.
Apply Proposition \ref{secred} to $\overline M=(H^0(\mathcal L),
\|\cdot\|_{\sup})$. We obtain
\begin{equation*}
\hsefhat(\lb) \leq \hsefhat(\lb_j(-c_j))+
\sum_{i=0}^{j} r_{i}^{\text{-}} c_i +4r \log r_0^{\text{-}}+ 2r_0^{\text{-}}
\log 3.
\end{equation*}
The result follows since $r_i^{\text{-}}\leq r_i$.

It is also easy to bound the intersection numbers.
In the decomposition
$$\lb_{i}(-c_i)=\lb_{i+1}+ \eb_{i+1},$$
both $\lb_{i}(-c_i)$ and $\lb_{i+1}$ are nef, and $\eb_{i+1}$ is effective.
It follows that
$$
\lb_{i}(-c_i)^2=\lb_{i+1}^2+ (\lb_{i}(-c_i)+\lb_{i+1})\cdot \eb_{i+1} \geq
\lb_{i+1}^2.
$$

By $\lb_{i}(-c_i)^2= \lb_{i}^2 -2d_{i} c_i$, we have
$$
\lb_{i}^2\geq \lb_{i+1}^2+2d_{i} c_i.
$$
Summing over $i=0, \cdots, j-1$, we get the first inequality.
\end{proof}

Now we prove Theorem B. By Proposition \ref{norm1}, it suffices to
prove
$$
\hsefhat(\lb) \le \frac{1}{2} \lb^2+  4d \log d+3d \log 3.
$$
It is classical that $g>0$ implies
$$
r_{i} \leq d_{i}, \quad i=0,\cdots, n-1.
$$
It also holds for $i=n$ if $\deg(\CL_{n,\QQ})\neq 0$.
The the lemma gives, for $j=0,\cdots, n-1$,
\begin{eqnarray} \nonumber
\hsefhat(\lb)-\frac12 \lb^2
\leq \hsefhat(\lb_j')-\frac12 \lb_j'^2 + 4d \log d+ 2d\log 3.
\end{eqnarray}

If $\deg(\CL_{n,\QQ})> 0$, the equality also holds for $j=n$.
Then the theorem is proved since $\hsefhat(\lb_n')=0$ and $\lb_n'^2\geq 0$.

It remains to treat the case $\deg(\CL_{n,\QQ})=0$.
Note that $\CL_{n,\QQ}$ is trivial since $\hsefhat(\lb_n)$
is base-point-free on the generic fiber by construction.
The inequality is not true for $j=n$.
We use the case $j=n-1$ instead.

To bound $\hsefhat(\lb_{n-1}')$, we apply Proposition \ref{easy}.
It gives
$$
\hsefhat(\lb_{n-1}') \le \frac{r_n'}{\deg (\CL_{n-1,\QQ}) } \lb_{n-1}'^2 +
r_n'\log3.
$$
Here $r_n'$ is the $\ZZ$-rank of $\Hsefhat(\lb_{n-1}')=\Hsefhat(\lb_{n})$.
It is easy to have $r_n' \le [K:\QQ]$ since $\CL_{n,\QQ}$ is trivial.

We claim that $\deg (\CL_{n-1, \mathbb{Q}}) \geq 2[K:\QQ]$, or equivalently
$\deg \CL_{n-1, K}\geq 2$.
If $n=1$, it is true by the assumption on $\lb$. Otherwise, by the construction
from
$\lb_{n-2}$,
the base locus of $\Hsefhat(\lb_{n-1})$ is empty or has dimension zero on $X$.
It follows that $\CL_{n-1, K}$ is base-point-free on $X_{K}$.
Its degree is at least two since $g>0$.

By the claim, we have
$$
\hsefhat(\lb_{n-1}') \le \frac12 \lb_{n-1}'^2 + r_n'\log3.
$$
It finishes proving Theorem B for $g>0$.

\subsection{Case of genus zero} \label{genus0}

Here we prove Theorem B in the case $g=0$.
Still apply Proposition \ref{onestep}. We have
\begin{eqnarray*}
\lb^2 & \geq &  \lb_n'^2+ 2\sum_{i=0}^{n} d_{i} c_i,\\
\hsefhat(\lb) &\leq& \hsefhat(\lb_n')+
\sum_{i=0}^{n} r_{i} c_i +4r_0 \log r_0+ 2r_0 \log 3.
\end{eqnarray*}
We still need to compare them.

Denote $\kappa= [K:\QQ].$
Then
$$d_{i}=\deg(\mathcal L_{i, \mathbb Q})=\kappa \deg(\mathcal L_{i, K}), \quad
r_i=h^0(\CL_{i,\QQ})=\kappa h^0(\CL_{i,K}).$$
Note that we do not have $r_i\leq d_i$ any more.
But we have $h^0(\CL_{i,K})=\deg(\mathcal L_{i, K})+1$, and thus
$$r_i= d_i+\kappa, \quad i=0,\cdots, n.$$

Hence, the inequalities yield
\begin{eqnarray*}
&& \hsefhat(\lb)-\frac12 \lb^2 \\
&\leq& \hsefhat(\lb_n')-\frac12 \lb_n'^2 +\kappa\sum_{i=0}^n c_i+ 4r_0 \log r_0+
2r_0\log 3\\
&\leq& \kappa\sum_{i=0}^n c_i+ 4r_0 \log r_0+ 2r_0\log 3.
\end{eqnarray*}
The proof is completed by bound
$$
\sum_{i=0}^n c_i \le \frac{1}{d} \lb^2,
$$
which is implied by the following result.

\begin{lemma} \label{sumci}
In the setting of Theorem \ref{degred}, for any genus $g\geq 0$,
$$
c_0+\sum_{i=0}^n c_i \le \frac{1}{\deg(\CL_\QQ)} \lb^2.
$$
\end{lemma}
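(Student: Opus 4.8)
The plan is to deduce the Lemma from a slightly stronger statement about the bundles $\lb_j(-c_j)$ produced by Theorem~\ref{degred}, proved by \emph{downward} induction on $j$. Writing $d_j=\deg(\CL_{j,\QQ})$, so that $d_0>d_1>\cdots>d_n$, the inductive claim is
$$
\lb_j(-c_j)^2 \ \geq\ d_j\,(c_{j+1}+c_{j+2}+\cdots+c_n),\qquad j=n,n-1,\dots,0,
$$
because for $j=0$ this reads $\lb(-c_0)^2\geq d_0(c_1+\cdots+c_n)$, and adding $\lb^2=\lb(-c_0)^2+2d_0c_0$ gives exactly $\lb^2\geq d_0\bigl(c_0+\sum_{i=0}^n c_i\bigr)$. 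The base case $j=n$ is the inequality $\lb_n(-c_n)^2\geq 0$, which holds because $\lb_n(-c_n)$ is nef.

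For the inductive step I would exploit the decomposition $\lb_j(-c_j)=\lb_{j+1}+\eb_{j+1}$ of Theorem~\ref{degred1}. Since $\lb_j(-c_j)$ is nef and $\eb_{j+1}$ effective,
$$
\lb_j(-c_j)^2=\lb_j(-c_j)\cdot\lb_{j+1}+\lb_j(-c_j)\cdot\eb_{j+1}\ \geq\ \lb_j(-c_j)\cdot\lb_{j+1}=\lb_{j+1}^2+\lb_{j+1}\cdot\eb_{j+1}.
$$
Now $\lb_{j+1}^2=\lb_{j+1}(-c_{j+1})^2+2d_{j+1}c_{j+1}$; and because the distinguished section $e_{j+1}$ has $\|e_{j+1}\|_{\sup}=1$, $\lb_{j+1}$ is semipositive, and every horizontal component of $\operatorname{div}(e_{j+1})$ has $\lb_{j+1}$-height at least $e_{\lb_{j+1}}=c_{j+1}$ while $\deg(\operatorname{div}(e_{j+1})_\QQ)=d_j-d_{j+1}$, one gets $\lb_{j+1}\cdot\eb_{j+1}\geq c_{j+1}(d_j-d_{j+1})$. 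Combining with the inductive hypothesis $\lb_{j+1}(-c_{j+1})^2\geq d_{j+1}(c_{j+2}+\cdots+c_n)$ yields
$$
\lb_j(-c_j)^2\ \geq\ c_{j+1}(d_j+d_{j+1})+d_{j+1}(c_{j+2}+\cdots+c_n).
$$

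The main obstacle is visible here: this supplies the coefficient $d_j+d_{j+1}$ in front of $c_{j+1}$ (which is more than enough) but only $d_{j+1}$ in front of the tail $c_{j+2}+\cdots+c_n$, whereas we need $d_j$; the gap $(d_j-d_{j+1})(c_{j+2}+\cdots+c_n)$ is exactly the price of the strict drop of generic degree, and closing it is the crux. The missing positivity should be harvested from the intersection $\lb_j(-c_j)\cdot\eb_{j+1}$ discarded in the first display: the components of $\operatorname{div}(e_{j+1})$ lie in the fixed locus of $\Hhat_{\rm sef}(\lb_j(-c_j))$, hence in the divisors of the sections realizing the first logarithmic minimum $\mu_1(\lb_j(-c_j))$, and by the norm-preserving bijections of Theorem~\ref{degred} those sections factor through $e_{j+2}\otimes\cdots\otimes e_n$, which forces their height—and thus $\lb_j(-c_j)\cdot\eb_{j+1}$—to carry the contributions $c_{j+2},\dots,c_n$. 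Equivalently, one folds in the successive-minima bound $\lb_j(-c_j)^2\geq \mu_1(\lb_j(-c_j))\,d_j$ (argued as in the proof of Proposition~\ref{easy}), using that $\mu_1(\lb_j(-c_j))=\mu_1(\lb_{j+1})=c_{j+1}+\cdots+c_{n-1}+\mu_1(\lb_n)$. The genuinely delicate point is the endpoint term $c_n$: since $\hhat_{\rm sef}(\lb_n(-c_n))=0$ one only knows $\mu_1(\lb_n)\leq c_n$, so this last contribution must be recovered separately—from $\lb_n^2\geq 2d_nc_n$, the degree-drop inequality $\lb_{n-1}(-c_{n-1})\cdot\eb_n\geq c_n(d_{n-1}-d_n)$, and the slack furnished by the strengthened left-hand side of the Lemma (the extra $c_0$), possibly together with Proposition~\ref{easy} applied to $\lb_{n-1}(-c_{n-1})$ as in the positive-genus argument of §\ref{thmapg}. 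I expect this terminal bookkeeping to be the part requiring the most care.
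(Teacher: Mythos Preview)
Your inductive scheme has a real gap that you yourself identify but do not close. After the step
\[
\lb_j(-c_j)^2 \ \geq\ c_{j+1}(d_j+d_{j+1})+d_{j+1}(c_{j+2}+\cdots+c_n),
\]
you are short by $(d_j-d_{j+1})(c_{j+2}+\cdots+c_n)$, and the two rescue attempts you sketch do not work as stated. The claim that the discarded term $\lb_j(-c_j)\cdot\eb_{j+1}$ ``carries the contributions $c_{j+2},\dots,c_n$'' would require a lower bound on $h_{\lb_j(-c_j)}(D)$ for the horizontal components $D$ of $\operatorname{div}(e_{j+1})$ in terms of \emph{later} absolute minima, and nothing in the construction furnishes that: the norm-preserving bijections relate sections, not heights of fixed divisors, and $e_{j+1}$ need not factor through $e_{j+2},\dots,e_n$. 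Likewise the asserted identity $\mu_1(\lb_j(-c_j))=c_{j+1}+\cdots+c_{n-1}+\mu_1(\lb_n)$ is not justified (the bijections only match $\Hsefhat$, not the full lattice realizing the minima), and even granting it you are still missing $c_n$, as you note. The proposed endpoint patches (Proposition~\ref{easy}, slack from the extra $c_0$) are not worked out and it is not clear they supply exactly the deficit $(d_0-d_n)c_n$ accumulated over all steps.

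The paper avoids induction altogether. Setting $\beta=c_1+\cdots+c_n$ and $\fb=\eb_1+\cdots+\eb_n$, it writes the single global decomposition $\lb_0'=\lb_n'+\fb+\ob(\beta)$ and expands $\lb_0'^2$ \emph{twice}: first as $\lb_0'\cdot(\lb_n'+\fb+\ob(\beta))\geq \lb_0'\cdot\lb_n'+d_0\beta$ (dropping $\lb_0'\cdot\fb\geq 0$), then re-expanding $\lb_0'\cdot\lb_n'=(\lb_n'+\fb+\ob(\beta))\cdot\lb_n'\geq \lb_n'^2+d_n\beta$ (dropping $\fb\cdot\lb_n'\geq 0$). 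This ``sandwich'' yields $\lb_0'^2\geq d_0\beta+d_n\beta$ in one stroke; adding $\lb^2=\lb_0'^2+2d_0c_0$ and discarding $\lb_n'^2\geq 0$ and $d_n\beta\geq 0$ gives $\lb^2\geq d_0(2c_0+\beta)$, which is the lemma. The point is that by intersecting the \emph{same} effective class $\fb+\ob(\beta)$ once against the largest nef bundle $\lb_0'$ and once against the smallest $\lb_n'$, you pick up the top degree $d_0$ on the full sum $\beta$ directly, and the degree-drop problem that wrecks your induction never arises.
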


\begin{proof}

Denote $\beta=c_1+\cdots+c_n$ and $\fb=\eb_1+\cdots+\eb_n$.
We have the decomposition
$$
\lb_0'(-\beta)=\lb_n'+\fb.
$$
Note that $\lb_0'(-\beta)$ is not nef any more.
But we can still have a weaker bound as follows:
\begin{eqnarray*}
\lb_0'^2
&=& \lb_0'\cdot(\lb_{n}' + \fb+\ob(\beta)) \\
&\geq& \lb_0' \cdot\lb_{n}'+d_0 \beta \\
&=& (\lb_{n}' + \fb+\ob(\beta))\cdot\lb_{n}'+d_0 \beta \\
&\geq& \lb_{n}'^2 + d_{n} \beta+d_0 \beta.
\end{eqnarray*}
Here $d_i=\deg(\CL_{i,\QQ})$ as usual.

Combine with
$$\lb^2= \lb(-c_0)^2+ 2d_0c_0. $$
We have
\begin{equation*}
\lb^2\geq  \lb_n'^2+ d_{n} \beta+d_0 \beta+2d_0c_0
\geq d_0 (2c_0+\beta).
\end{equation*}
The result follows.

\end{proof}

\begin{remark}
The result is in the spirit of the successive minima of S. Zhang \cite{Zh1}.
\end{remark}

The result is stronger and more general than what we need here.
It will be used in the proof of Theorem C in full strength.

\subsection{Extra case of degree one}

\begin{prop} \label{deg1}
Let $X$ be a regular and geometrically connected arithmetic surface of genus $g$
over $O_K$.
Let $\lb$ be a nef hermitian line bundle on $X$ with $\deg(\CL_{K})=1$.

\begin{itemize}
	\item[(1)] If $g>0$, then
$$ \hhat(\lb) \le \lb^2+ [K:\QQ] \log 3.$$
	\item[(2)] If $g=0$, then
$$ \hhat(\lb) \le \lb^2+ 5[K:\QQ] \log 3.$$
\end{itemize}
\end{prop}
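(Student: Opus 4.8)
The plan is to handle the two genus cases separately, since they behave quite differently: when $g>0$ a degree-one line bundle on the generic fibre has at most one section, so part~(1) falls out of the ``trivial bound'' of Proposition~\ref{easy}; when $g=0$ the generic fibre is $\mathbb{P}^1$, there are two sections, and one is forced to pass through the reduction machinery of \S3--\S4.

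For part~(1): since $g>0$, a classical consequence of Riemann--Roch gives $h^0(\CL_K)\le 1$, hence $h^0(\CL_\QQ)\le[K:\QQ]=\deg(\CL_\QQ)$. The $\ZZ$-rank $r$ of the submodule of $H^0(\CL)$ generated by $\Hhat(\lb)$ is then $\le\deg(\CL_\QQ)$, and the $\hhat$-version of Proposition~\ref{easy} gives
\[
\hhat(\lb)\ \le\ \frac{r}{\deg(\CL_\QQ)}\,\lb^2+r\log 3\ \le\ \lb^2+[K:\QQ]\log 3,
\]
using $\lb^2\ge 0$ and $r/\deg(\CL_\QQ)\le 1$. (If $\Hhat(\lb)=0$ there is nothing to prove.) This settles~(1).

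For part~(2), $g=0$ and $\deg(\CL_K)=1$, so $h^0(\CL_K)=2$ by Riemann--Roch. I would run $\lb$ through Theorem~\ref{degred}, producing triples $(\lb_i,\eb_i,c_i)$, $i=0,\dots,n$, with $\lb_0=\lb$, $c_i=e_{\lb_i}$, norm-preserving bijections $\Hsefhat(\lb_{i+1})\to\Hsefhat(\lb_i(-c_i))$, and $\hsefhat(\lb_n(-c_n))=0$. The key observation is that the reduction is \emph{very short}: each genuine step puts, by Proposition~\ref{fixedpart}, a horizontal divisor into the fixed part, so $\deg(\CE_{i+1,K})\ge 1$ and the generic degree strictly drops; since it starts at $1$ and stays $\ge 0$ by nefness, there is at most one step, and if a step occurs then $\deg(\CL_{1,K})=0$, so $h^0(\CL_{1,\QQ})=[K:\QQ]$. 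To assemble the bound, combine the change-of-metric inequalities of Proposition~\ref{norm1} with the norm-preserving bijections. In the case $n=1$ this yields
\[
\hhat(\lb)\le \hsefhat(\lb_1)+2[K:\QQ](c_0+\log 3),\qquad
\hsefhat(\lb_1)\le [K:\QQ](c_1+\log 3),
\]
the second using $\hsefhat(\lb_1(-c_1))=0$; hence $\hhat(\lb)\le [K:\QQ](2c_0+c_1)+3[K:\QQ]\log 3$. Now Lemma~\ref{sumci} gives $2c_0+c_1\le \lb^2/\deg(\CL_\QQ)=\lb^2/[K:\QQ]$, so $\hhat(\lb)\le \lb^2+3[K:\QQ]\log 3\le \lb^2+5[K:\QQ]\log 3$. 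The case $n=0$ (which includes $\hsefhat(\lb)=0$, since then $\hsefhat(\lb(-c_0))=0$) is the same computation with the $c_1$-term dropped, giving the even smaller error $2[K:\QQ]\log 3$.

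The step I expect to be the main obstacle is recognizing, in part~(2), that neither Proposition~\ref{easy} nor the genus-zero case of Theorem~B alone is strong enough — each would leave a coefficient larger than $1$ in front of $\lb^2$ — and that the remedy is precisely to exploit both the brevity of the reduction ($n\le 1$) and the sharp estimate $\sum_i c_i\le \lb^2/\deg(\CL_\QQ)$ of Lemma~\ref{sumci}, while keeping careful track of the degenerate cases.
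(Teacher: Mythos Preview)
Your proof is correct. Part~(1) matches the paper exactly: a direct appeal to Proposition~\ref{easy}.

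For part~(2) your route differs from the paper's. The paper performs only the single metric shift $\lb\to\lb'=\lb(-c_0)$ and then applies Proposition~\ref{easy} directly to $\lb'$: since $e_{\lb'}=0$, Proposition~\ref{fixedpart} forces a horizontal component in the fixed part of $\Hsefhat(\lb')$, so the $O_K$-rank of $\Hsefhat(\lb')$ is at most~$1$, whence $\hsefhat(\lb')\le\lb'^2+\kappa\log 3$; together with $\lb^2=\lb'^2+2\kappa c_0$ this gives $\hsefhat(\lb)\le\lb^2+3\kappa\log 3$, and Proposition~\ref{norm1}(2) finishes. You instead carry out the further decomposition $\lb'=\lb_1+\eb_1$ from Theorem~\ref{degred}, use the termination $\hsefhat(\lb_1(-c_1))=0$, and invoke Lemma~\ref{sumci} to bound $2c_0+c_1$. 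Both arguments rest on the same key input (Proposition~\ref{fixedpart}); the paper's is more economical since it avoids both the second decomposition and Lemma~\ref{sumci}, while yours shows that the general machinery of \S4 handles this boundary case uniformly. One small remark: your displayed bound $\hhat(\lb)\le\hsefhat(\lb_1)+2\kappa(c_0+\log 3)$ is not a literal instance of Proposition~\ref{norm1} as stated, but it follows by the same limit argument used in its proof (let $\alpha\to c_0^{+}$ in part~(1)); if instead you chain parts~(1) and~(2) of Proposition~\ref{norm1} the error term becomes $4\kappa\log 3$, still landing safely at the claimed $5\kappa\log 3$.
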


\begin{proof}

If $g>0$, the result follows from Proposition \ref{easy}.
If $g=0$, we use the method of \S \ref{thmapg} to get a good bound.
Denote $\kappa=[K:\QQ]$ as usual.
We still have
$$\lb^2=\lb'^2+2\kappa c_0, \quad \hsefhat(\lb)\leq \hsefhat(\lb') +2\kappa c_0+
2\kappa\log 3. $$
It follows that
$$
\hsefhat(\lb) - \lb^2 \leq \hsefhat(\lb')- \lb'^2+ 2\kappa\log 3.
$$
Because the $O_K$-rank $\Hhat(\lb')$ is at most one, Proposition \ref{easy}
gives
$$
\hsefhat(\lb') \leq \lb'^2+ \kappa\log 3.
$$
It follows that
$$
\hsefhat(\lb) \leq \lb^2+ 3\kappa\log 3.
$$
The result follows from Proposition \ref{norm1}.

\end{proof}

\section{Proof of Theorem C}

Our tool to get the strong bound in Theorem C is Clifford's theorem in the
classical setting.
For convenience, we recall it here.

Let $C$ be a projective, smooth and geometrically connected curve over a field
$k$.
Recall that a line bundle $L$ on $C$ is special if
$$h^0(L)>0, \quad h^1(L)>0.$$
The following is Clifford's theorem (cf.\cite[Theorem IV.5.4]{Ha}).

\begin{theorem}[Clifford]
If $L$ is a special line bundle on $C$, then
$$
h^0(L)\leq \frac12 \deg(L)+1.
$$
Furthermore, if $C$ is not hyperelliptic, then the equality is obtained if and
only if
$L\simeq O_C$ or $L\simeq\omega_{C/k}$.
\end{theorem}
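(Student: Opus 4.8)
The plan is to reduce Clifford's inequality to a subadditivity statement for $h^0$ of effective divisors and then feed it into Riemann--Roch. Since $h^0$, degrees, and specialness are all unchanged under extension of the ground field (flat base change, using that $C$ is geometrically connected), I would assume throughout that $k$ is algebraically closed. The inequality $h^0(L)\le\tfrac12\deg(L)+1$ will then follow from the following lemma, which I regard as the real content.

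\medskip
\noindent\emph{Lemma.} For any effective divisors $D,E$ on $C$, one has $h^0(D)+h^0(E)\le h^0(D+E)+1$.
\medskip

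To prove the lemma I would look at the addition map between complete linear systems,
$$
\phi\colon |D|\times|E|\longrightarrow|D+E|,\qquad (A,B)\longmapsto A+B,
$$
which is a morphism of projective varieties, being induced by the bilinear multiplication pairing $H^0(\mathcal O(D))\times H^0(\mathcal O(E))\to H^0(\mathcal O(D+E))$ (which sends nonzero pairs to nonzero sections since $C$ is integral). The fibre of $\phi$ over a point $[F]\in|D+E|$ consists of the pairs $(A,F-A)$ with $A\in|D|$ and $A\le F$; as $F$ is a fixed effective divisor, only finitely many subdivisors $A\le F$ of the prescribed degree occur, so $\phi$ has finite fibres. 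Hence $\dim\overline{\operatorname{im}\phi}=\dim|D|+\dim|E|$, and since $\operatorname{im}\phi\subseteq|D+E|$ we get $\dim|D|+\dim|E|\le\dim|D+E|$, which is the lemma after substituting $\dim|\cdot|=h^0(\cdot)-1$.

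Granting the lemma, the inequality is immediate. Choose an effective divisor $D$ with $\mathcal O(D)\cong L$ and an effective divisor $E$ with $\mathcal O(E)\cong\omega_{C/k}\otimes L^{-1}$; the latter exists because $h^0(E)=h^1(L)>0$ by specialness. Applying the lemma to $D,E$ and using $D+E\sim K$ (so $h^0(D+E)=h^0(\omega_{C/k})=g$), $h^0(E)=h^1(L)$, and the Riemann--Roch relation $h^1(L)=h^0(L)-\deg L+g-1$, one obtains $2h^0(L)\le\deg L+2$, as desired.

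For the equality clause I would first clear the two degenerate cases: if $\deg L=0$ then $h^0(L)=1$ forces $L\cong\mathcal O_C$, and if $\deg L=2g-2$ then $h^0(\omega_{C/k}\otimes L^{-1})=1$ with $\deg(\omega_{C/k}\otimes L^{-1})=0$ forces $L\cong\omega_{C/k}$. In the remaining range $0<\deg L<2g-2$, equality in Clifford forces equality in the Lemma, so the map $\phi$ (now with $E\sim K-D$) is dominant and generically finite onto $|K|$; transporting this through the canonical embedding $C\hookrightarrow\mathbb P^{g-1}$, which is an embedding precisely because $C$ is non-hyperelliptic, a general hyperplane section of the canonical curve would decompose as the join of the spans of a member $A\in|D|$ and a member $B\in|K-D|$, each a proper linear subspace, contradicting the general position (uniform position) principle for points of a general hyperplane section of a non-hyperelliptic curve. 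Hence $C$ is hyperelliptic. I expect this last step to be the only genuine obstacle: the inequality is short once the Lemma is in hand, whereas the equality case really needs an extra geometric input — the general position theorem, or equivalently the inductive analysis of linear systems carried out in \cite{Ha}.
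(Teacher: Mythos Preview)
The paper does not prove this statement; Clifford's theorem is quoted from \cite[Theorem IV.5.4]{Ha} as a classical input and applied as a black box in the proof of Theorem~C. So there is no ``paper's own proof'' to compare against.

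Your argument for the inequality is correct and is essentially the standard one found in \cite{Ha}: the subadditivity lemma $h^0(D)+h^0(E)\le h^0(D+E)+1$, proved via the finite-fibred addition map $|D|\times|E|\to|D+E|$, followed by Riemann--Roch and Serre duality applied with $E\sim K-D$. The reduction to algebraically closed $k$ is harmless for the reasons you give.

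For the equality clause your sketch via the uniform position theorem is correct over an algebraically closed field of characteristic zero, which is all the paper needs (the generic fibre $X_K$ lives over a number field). The precise contradiction is that, writing a general canonical divisor as $A+B$ with $A\in|D|$, geometric Riemann--Roch forces the $\deg D$ points of $A$ to span only a $(\tfrac{1}{2}\deg D-1)$-plane, violating general position once $0<\deg D\le g-1$ (and one may assume this by swapping $D$ and $K-D$). One caveat worth flagging: the uniform position theorem can fail in positive characteristic, so if you ever want the statement over an arbitrary base field, the inductive argument in \cite{Ha} that you allude to---strip base points, then reduce $\deg D$ while preserving Clifford equality until one reaches a $g^1_2$---is the characteristic-free route.
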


\subsection{Hyperelliptic case}

We first prove Theorem C in the hyperelliptic case.
The proof here is very similar to that in \S \ref{genus0}.

By Proposition \ref{onestep},
\begin{eqnarray*}
\lb^2 & \geq &  \lb_n'^2+ 2\sum_{i=0}^{n} d_{i} c_i,\\
\hsefhat(\lb) &\leq& \hsefhat(\lb_n')+
\sum_{i=0}^{n} r_{i} c_i +4r_0 \log r_0+ 2r_0 \log 3.
\end{eqnarray*}
Here $d_{i}=\deg(\mathcal L_{i, \mathbb Q})$ and $r_i=h^0(\CL_{i,\QQ})$.

By construction, each $\CL_{i,K}$ is special.
Clifford's theorem gives
$$
h^0(\CL_{i,K}) \leq \frac12\deg(\mathcal L_{i, K})+1,
$$
and thus
$$
r_i \leq \frac12 d_i+\kappa, \quad \kappa=[K:\QQ].
$$

Hence, the inequalities yield
\begin{eqnarray*}
&&\hsefhat(\lb)-\frac14 \lb^2\\
&\leq& \hsefhat(\lb_n')-\frac14 \lb_n'^2 +\kappa\sum_{i=0}^n c_i+ 4r_0 \log r_0+
2r_0\log 3\\
&\leq& \kappa\sum_{i=0}^n c_i+ 4r_0 \log r_0+ 2r_0\log 3.
\end{eqnarray*}
By Lemma \ref{sumci},
$$
\sum_{i=0}^n c_i \le \frac{1}{d_0}\lb^2.
$$
We obtain
\begin{eqnarray*}
\hsefhat(\lb)\leq \frac14 \lb^2
+ \frac{1}{d^\circ}\lb^2+ 4r_0 \log r_0+ 2r_0\log 3.
\end{eqnarray*}
The result is proved.
The bound and its proof are obviously available for any $g>1$.

\subsection{Non-hyperelliptic case}

For $i=1,\cdots, n-1$, Clifford's theorem gives a stronger bound
$$
h^0(\CL_{i,K}) \leq \frac12\deg(\mathcal L_{i, K})+\frac12,
$$
and thus
$$
r_i \leq \frac12 d_i+\frac12\kappa.
$$
It is also true for $i=0$ or $i=n$ as long as $\CL_{i, K}$ is neither the
canonical bundle nor the trivial bundle.
For $i=0$ or $i=n$, it is always safe to use the bound
$$
r_i \leq \frac12 d_i+\kappa.
$$
The proof of Theorem C is similar,
but more subtle due to the possible failure of the strong bound for $i=0$ and
$i=n$.

We first assume that $\CL_{n, K}$ is non-trivial.
By the strong bounds, Proposition \ref{onestep} gives
\begin{eqnarray*}
&&\hsefhat(\lb)-\frac14 \lb^2\\
&\leq& \hsefhat(\lb_{n}')-\frac14 \lb_{n}'^2 +\frac12\kappa ( c_0+\sum_{i=0}^{n}
c_i)+ 4r_0 \log r_0+ 2r_0\log 3.
\end{eqnarray*}
By Lemma \ref{sumci},
In the setting of Theorem \ref{degred}, for any $g\geq 0$,
$$
c_0+\sum_{i=0}^n c_i \le \frac{1}{d_0}\lb^2.
$$
It follows that
\begin{eqnarray*}
\hsefhat(\lb)\leq
\frac14 \lb^2 +\frac{1}{2d^\circ}\lb^2+ 4r_0 \log r_0+ 2r_0\log 3.
\end{eqnarray*}
It gives
\begin{eqnarray*}
\hhat(\lb)\leq (\frac14 +\frac{1}{2d^\circ})\lb^2+ 4r_0 \log r_0+ 3r_0\log 3.
\end{eqnarray*}

It remains to treat the case that $\CL_{n, K}$ is trivial.
As in the proof of Theorem B in \S \ref{thmapg}, we go back to $n-1$.
Proposition \ref{onestep} gives
\begin{eqnarray*}
&&\hsefhat(\lb)-\frac14 \lb^2\\
&\leq& \hsefhat(\lb_{n-1}')-\frac14 \lb_{n-1}'^2 +\frac12\kappa (
c_0+\sum_{i=0}^{n-1} c_i)+ 4r_0 \log r_0+ 2r_0\log 3.
\end{eqnarray*}

As in \S \ref{thmapg}, $\deg(\CL_{n-1, K})>1$ since $\CL_{n-1, K}$ is
base-point-free by construction, and
$\Hhat(\lb_{n-1}')$ has $\ZZ$-rank at most $\kappa$.
Apply Proposition \ref{easy}. We have
$$
\hsefhat(\lb_{n-1}') \le \frac{1}{2} \lb_{n-1}'^2 + \kappa\log3.
$$
This is actually a special case of Theorem B, but the error term here is better.
Thus
$$
\hsefhat(\lb_{n-1}')-\frac14 \lb_{n-1}'^2 \le \frac{1}{2}\hsefhat(\lb_{n-1}') +
\frac12 \kappa\log3.
$$
By Proposition \ref{norm1},
$$
\hsefhat(\lb_{n-1}')=\hsefhat(\lb_{n}) \leq \hsefhat(\lb_{n}') +\kappa c_n+
\kappa\log3=\kappa c_n+ \kappa\log3.
$$
It follows that
$$
\hsefhat(\lb_{n-1}')-\frac14 \lb_{n-1}'^2 \le \frac{1}{2}\kappa c_n+
\kappa\log3.
$$

Therefore, the bound on $\hsefhat(\lb)$ becomes
\begin{eqnarray*}
&&\hsefhat(\lb)-\frac14 \lb^2\\
&\leq& \frac12\kappa ( c_0+\sum_{i=0}^{n} c_i)+ 4r_0 \log r_0+ 2r_0\log 3+
\kappa\log3.
\end{eqnarray*}
By Lemma \ref{sumci},
$$
c_0+\sum_{i=0}^n c_i \le \frac{1}{d_0}\lb^2.
$$
It follows that
\begin{eqnarray*}
\hsefhat(\lb)\leq
\frac14 \lb^2 +\frac{1}{2d^\circ}\lb^2+ 4r_0 \log r_0+ 2r_0\log 3+\kappa \log3.
\end{eqnarray*}
Thus
\begin{eqnarray*}
\hhat(\lb)\leq (\frac14 +\frac{1}{2d^\circ})\lb^2+ 4r_0 \log r_0+ 4r_0\log 3.
\end{eqnarray*}
It finishes the proof.

\subsection{Application to the canonical bundle}

Theorem D is a special case of Theorem C by Faltings's result that $\omegaxb$ is
nef on $X$.
Corollary E is an easy consequence of Theorem D and Faltings's arithmetic
Noether formula.
Here we briefly track the ``error terms'' in Corollary E.

Recall that $\omegaxb=(\omega_X, \|\cdot\|_{\Ar})$ is endowed with the Arakelov
metric $\|\cdot\|_{\Ar}$.
It induces on $H^0(X,\omega_X)_{\CC}$ the supremum norm
$$
\|\alpha\|_{\sup}=\sup_{z\in M}\|\alpha(z)\|_{\Ar}, \quad \alpha \in H^0(X(\CC),
\Omega_{X(\CC)}^1).
$$
Consider
$$
\chi_{\rm \sup}(\omegaxb)=
\log
\frac{\vol(B_{\sup}(\omega_X))}{\vol(H^0(X,\omega_X)_{\RR}/H^0(X,\omega_X))}.
$$
Here $B_{\sup}(\omega_X)$ is the unit ball in $H^0(X,\omega_X)_{\RR}$ associated
to $\|\cdot\|_{\sup}$.

By Minkowski' theorem,
$$
\hhat(\omegaxb)\geq \chi_{\rm \sup}(\omegaxb)-r\log2.
$$
Here
$$r=g[K:\QQ] \leq (2g-2)[K:\QQ]=d.$$
Thus Theorem D implies
$$
\chi_{\rm \sup}(\omegaxb) \le \frac{g+\varepsilon-1}{4(g-1)} \omegaxb^2+  4d
\log (3d)+r\log2.
$$

Now we compare $\chi_{\rm \sup}(\omegaxb)$ with the Faltings height $\chi_{\rm
Fal}(\omega_X)$.
The latter is the arithmetic degree of the hermitian $O_K$-module
$H^0(X,\omega_X)$
endowed with the natural metric
$$
\|\alpha\|_{\rm nat}^2 = \frac i2 \int_{X(\CC)} \alpha \wedge \overline\alpha,
\quad
\alpha\in H^0(X(\CC),\Omega_{X(\CC)}^1).
$$
By definition, it is easy to obtain
$$
\chi_{\rm Fal}(\omega_X)=
\log \frac{\vol(B_{\rm
nat}(\omega_X))}{\vol(H^0(X,\omega_X)_{\RR}/H^0(X,\omega_X))}
- \chi(O_K^g).
$$
Here $B_{\rm nat}(\omega_X)$ is the unit ball in $H^0(X,\omega_X)_{\RR}$
associated to $\|\cdot\|_{\rm nat}$,
and
$$\chi(O_K^g)=r_1\log V(g)+r_2\log V(2g) - \frac12 g\log |D_K|.$$
Here $D_K$ the absolute discriminant of $K$, $r_1$ (resp. $2r_2$) is
the number of real (resp. complex) embeddings of $K$ in $\CC$,
and $V(r)=\pi^{\frac r2}/\Gamma(\frac r2+1)$ is the volume of the
standard unit ball in the Euclidean space $\RR^r$.

It follows that
$$
\chi_{\rm Fal}(\omega_X)-\chi_{\sup}(\overline\omega_X)
=\log \frac{\vol(B_{\rm nat}(\omega_X))}{\vol(B_{\sup}(\omega_X))}- \chi(O_K^g)
=\gamma_{X_{\infty}}- \chi(O_K^g).
$$
The second equality follows from the definition of
$$\gamma_{X_{\infty}}=\sum_{\sigma:K\hookrightarrow \CC}\gamma_{X_{\sigma}}.$$
Therefore,
$$
\chi_{\rm Fal}(\omega_X) \le \frac{g+\varepsilon-1}{4(g-1)} \omegaxb^2+
\gamma_{X_{\infty}}
- \chi(O_K^g)+ 4d \log (3d)+r\log2.
$$
Stirling's approximation gives
$$\chi(O_K^g)>\frac12 r \log(2\pi) -\frac12 r\log r - \frac12 g\log |D_K|.$$
Thus the inequality implies
$$
\chi_{\rm Fal}(\omega_X) \le \frac{g+\varepsilon-1}{4(g-1)} \omegaxb^2+
\gamma_{X_{\infty}}
+\frac12 g\log |D_K|+\frac92 d\log d+ 4d \log 3.
$$

If $X_K$ is not hyperelliptic, then $\varepsilon=1$ and the coefficient before
$\omegaxb^2$ is exactly the same as the inequality of Bost \cite{Bo} concerning
the height and the slope in this case.
In the following, denote
$$
C'=\frac12 g\log |D_K|+\frac92 d\log d+ 4d \log 3.
$$

Combine with Faltings's arithmetic Noether formula
$$
\chi_{\rm Fal}(\omega_X) = \frac{1}{12} (\overline\omega_X^2 +\delta_X)
-\frac13 r \log(2\pi).
$$
We have
\begin{eqnarray*}
&& \left(2+\frac{3\varepsilon}{g-1}\right)\overline\omega_X^2 \\
&\geq& \delta_X - 12 \gamma_{X_{\infty}}-12 C'- 4r \log(2\pi)\\
&\geq& \delta_X - 12 \gamma_{X_{\infty}}
-6 g\log |D_K|-54 d\log d- 61d.
\end{eqnarray*}
Similarly,
\begin{eqnarray*}
&& \left(8+\frac{4\varepsilon}{g-1+\varepsilon}\right)\chi_{\rm
Fal}(\overline\omega_X) \\
&\geq& \delta_X - \frac{4(g-1)}{g-1+\varepsilon} \gamma_{X_{\infty}}-
\frac{4(g-1)}{g-1+\varepsilon} C'
-4r\log(2\pi)\\
&\geq& \delta_X - \frac{4(g-1)}{g-1+\varepsilon} \gamma_{X_{\infty}}- 4 C'
-4r\log(2\pi)\\
&\geq& \delta_X - \frac{4(g-1)}{g-1+\varepsilon} \gamma_{X_{\infty}}- 2g\log
|D_K|
-18 d\log d-25 d.
\end{eqnarray*}
It completes the inequalities.


\begin{thebibliography}{[AB]}

\bibitem[Ar]{Ar}
S. J. Arakelov, An intersection theory for divisors on an arithmetic surface.
Izv. Akad. Nauk SSSR Ser. Mat. 38 (1974), 1179--1192.


\bibitem[Be]{Berman}{R. Berman, Bergman kernels and equilibrium measures for
ample line bundles, arXiv:0710.4375.}

\bibitem[Bo]{Bo}
J. B. Bost, Semi-stability and heights of cycles, Invent. Math. 118 (1994), no.
2, 223--253.

\bibitem[Ch]{Ch}{H. Chen: \textit{Positive degree and arithmetic bigness.}
arXiv: 0803.2583v3.}

\bibitem[Fa]{Fa}{G. Faltings, Calculus on arithmetic surfaces,
Ann. of Math. (2) 119 (1984),387--424.}

\bibitem[GS1]{GS1}{H. Gillet, C. Soul\'e, On the number of lattice points in
convex symmetric bodies and their duals,
Israel J. Math., 74 (1991), no. 2-3, 347--357.}

\bibitem[GS2]{GS2}{H. Gillet, C. Soul\'e, An arithmetic Riemann-Roch theorem,
Invent. Math., 110 (1992), 473--543.}

\bibitem[Ha]{Ha}{R. Hartshorne, Algebraic geometry.
Graduate Texts in Mathematics, No. 52. Springer-Verlag, New York-Heidelberg,
1977.}


\bibitem[Li]{Li}{J. Lipman, Desingularization of two-dimensional schemes. Ann.
Math. (2) 107 (1978), no. 1, 151--207. }

\bibitem[MB]{MB}
L. Moret-Bailly, La formule de Noether pour les surfaces arithm\'etiques.
Invent. Math. 98 (1989), no. 3, 491--498.

\bibitem[Mo1]{Mo1}{A. Moriwaki, Arithmetic height functions over finitely
generated fields, Invent. Math. 140 (2000), no. 1, 101--142.}

\bibitem[Mo2]{Mo2}{A. Moriwaki, Continuity of volumes on arithmetic varieties,
J. Algebraic Geom. 18 (2009), no. 3, 407--457.}

\bibitem[Mo3]{Mo3}{A. Moriwaki, Zariski decompositions on arithmetic surfaces,
arXiv: 0911.2951v4.}

\bibitem[Pa]{Pa}
A. N. Parshin, The Bogomolov-Miyaoka-Yau inequality for the arithmetical
surfaces and its applications. S\'eminaire de Th\'eorie des Nombres, Paris
1986-87, 299--312,
Progr. Math., 75, Birkh\"auser Boston, Boston, MA, 1988.


\bibitem[Sh]{Sh}{D. Shin, Noether inequality for a nef and big divisor on a
surface, Commun. Korean Math. Soc. 23 (2008), no. 1, 11--18. }

\bibitem[Yu1]{Yu1}{X. Yuan, Big line bundle over arithmetic varieties, Invent.
Math. 173 (2008), no. 3, 603--649.}

\bibitem[Yu2]{Yu2}{X. Yuan, On volumes of arithmetic line bundles, Compos. Math.
145 (2009), no. 6, 1447--1464. }

\bibitem[Zh1]{Zh1}{S. Zhang, Positive line bundles on arithmetic surfaces,
Ann. of Math. (2), 136 (1992), no. 3, 569--587. }

\bibitem[Zh2]{Zh2}{S. Zhang, Admissible pairing on a curve. Invent. Math. 112
(1993), no. 1, 171--193. }

\bibitem[Zh3]{Zh3}{S. Zhang, Gross-Schoen cycles and dualising sheaves.
Invent. Math. 179 (2010), no. 1, 1--73. }

\end{thebibliography}
\end{document}